\documentclass[twoside,11pt]{article}

\usepackage{graphicx, subfigure}
\usepackage[top=1in,bottom=1in,left=1in,right=1in]{geometry}
\usepackage[sort&compress]{natbib} \setlength{\bibsep}{0.0pt}
\usepackage{amsfonts, amsmath, amssymb, amsthm, constants, bbm}
\usepackage{MnSymbol}
\usepackage{mathtools}
\usepackage[mathscr]{eucal}
\usepackage{booktabs}
\usepackage{float}
\setcounter{secnumdepth}{6}


\usepackage{color}
\definecolor{darkred}{RGB}{100,0,0}
\definecolor{darkgreen}{RGB}{0,100,0}
\definecolor{darkblue}{RGB}{0,0,150}

\usepackage{hyperref}
\hypersetup{colorlinks=true, linkcolor=darkred, citecolor=darkgreen, urlcolor=darkblue}
\usepackage{url}



\def\R{\mathscr{R}}
\def\Null{\cI_0}

\def\H{\bbH}

\def\fdr{\textsc{fdr}}
\def\fdp{\textsc{fdp}}
\def\fnr{\textsc{fnr}}
\def\fnp{\textsc{fnp}}


\newtheorem{thm}{Theorem}

\newtheorem{lem}{Lemma}

\theoremstyle{remark}

\newtheorem{rem}{Remark}

\def\beq{\begin{equation}} 
\def\eeq{\end{equation}}
\def\beqn{\beq\notag}
\def\Bitem{\begin{itemize}\setlength{\itemsep}{.2in}}
\def\bitem{\begin{itemize}\setlength{\itemsep}{.05in}}
\def\eitem{\end{itemize}}
\def\Benum{\begin{enumerate}\setlength{\itemsep}{.2in}}
\def\benum{\begin{enumerate}\setlength{\itemsep}{.05in}}
\def\eenum{\end{enumerate}}
\def\bmult{\begin{multline*}}
\def\emult{\end{multline*}}
\def\bcenter{\begin{center}}
\def\ecenter{\end{center}}
\def\bframe{\begin{frame}}
\def\eframe{\end{frame}}

\newcommand{\thmref}[1]{Theorem~\ref{thm:#1}}

\newcommand{\lemref}[1]{Lemma~\ref{lem:#1}}
\newcommand{\secref}[1]{Section~\ref{sec:#1}}
\newcommand{\figref}[1]{Figure~\ref{fig:#1}}
\newcommand{\tabref}[1]{Table~\ref{tab:#1}}

\DeclareMathOperator*{\argmax}{arg\, max}




\def\cA{\mathcal{A}}

\def\cI{\mathcal{I}}

\def\cL{\mathcal{L}}

\def\cN{\mathcal{N}}


\def\bP{\mathbf{P}}




\def\bbH{\mathbb{H}}
\def\bbI{\mathbb{I}}

\def\bbR{\mathbb{R}}

\newcommand{\E}{\operatorname{\mathbb{E}}}

\def\iid{\stackrel{\rm iid}{\sim}}


\def\eps{\varepsilon}

\def\iff{\ \Leftrightarrow \ }

\def\1{\mathbbm{1}}
\newcommand{\IND}[1]{\bbI\{ #1 \}}

\definecolor{purple}{rgb}{0.4,.1,.9}
\definecolor{new}{rgb}{0.5,0.1,0.1}

\newcommand\blfootnote[1]{%
  \begingroup
  \renewcommand\thefootnote{}\footnote{#1}%
  \addtocounter{footnote}{-1}%
  \endgroup
}


\pagestyle{myheadings}

\begin{document}
\thispagestyle{empty}

\title{A Scan Procedure for Multiple Testing}
\author{Shiyun Chen \and Andrew Ying \and Ery Arias-Castro}
\date{}
\maketitle

\blfootnote{The authors are with the Department of Mathematics, University of California, San Diego, USA.  Contact information is available \href{http://www.math.ucsd.edu/people/graduate-students/}{here} and \href{http://math.ucsd.edu/\~eariasca}{here}.}

\begin{abstract}
In a multiple testing framework, we propose a method that identifies the interval with the highest estimated false discovery rate of P-values and rejects the corresponding null hypotheses.  Unlike the Benjamini-Hochberg method, which does the same but over intervals with an endpoint at the origin, the new procedure `scans' all intervals.  In parallel with \citep*{storey2004strong}, we show that this scan procedure provides strong control of asymptotic false discovery rate.  In addition, we investigate its asymptotic false non-discovery rate, deriving conditions under which it outperforms the Benjamini-Hochberg procedure.  For example, the scan procedure is superior in power-law location models. 

\medskip
\textbf{Keywords:} multiple testing, scan procedure, Benjamini-Hochberg procedure, false discovery rate (FDR), false non-discovery rate (FNR).
\end{abstract}


\section{Introduction} \label{sec:intro}
Multiple testing problems arise in a wide range of applications, and are most acute in contexts where data are large and complex, and where standard data analysis pipelines involve performing a large number of tests.  
\cite{benjamini1995controlling} proposed to control the false discovery rate (FDR) as a much less conservative criterion than the family-wise error rate (FWER).   They also proposed a method (referred to as the BH method henceforth) for achieving this under some conditions, such as independence of the P-values.
Since then, FDR controlling methods have been proposed and in turn adopted by practitioners faced with large-scale testing problems.  
Although a number of variants have been proposed, most of these methods are also based on computing a threshold based on the P-values and rejecting the null hypotheses corresponding to P-values below that threshold \citep{genovese2004stochastic, storey2002direct, storey2004strong}.
See \cite{roquain2011type} for a survey. 

A threshold approach to multiple testing is natural stemming from the fact that the smaller a P-value is, the more evidence it provides against the null hypothesis being tested.  However, we argue that this is not so obvious in the context of multiple testing, particularly in harder cases where the alternatives are not easily identified and in which most of the the smallest P-values come from true null hypotheses.  
This was already understood by \cite{chi2007performance}, who proposed a complex method which may be roughly described as applying the BH method at multiple locations in the unit interval, each location playing the role of the origin.  The result is a rejection region\footnote{ In context of multiple testing, a rejection region is a subset of the unit interval which identifies the P-values whose null hypotheses are to be rejected.} made of possibly multiple intervals.
  
In the present paper we propose a simpler approach based on the longest interval whose estimated FDR is below the prescribed level.  Compared to \citep{chi2007performance}, the method is simpler and is already shown to outperform the BH method in some settings of potential interest, such as in power-law location models.  The method is simple and intuitive, and can be seen as a direct extension of the approach of \cite{storey2002direct}.
It thus presents a sort of minimal working example where looking beyond threshold methods can be beneficial.  

Scanning over intervals is a common procedure for detecting areas of interest in a point process at least since the work of \cite{naus1965distribution}.  In this context, and its extension to discrete signals, the main task has been to test for homogeneity, and some articles have tackled such situations from a multiple testing angle \citep{siegmund2011false, picard2017continuous, benjamini2007false, caldas2006controlling, pacifico2007scan, perone2004false}.
While these papers aim at controlling the FDR when scanning spatiotemporal data, here we consider a standard multiple testing situation with a priori no spatiotemporal structure, and offer scanning as a way to generalize and potentially improve upon threshold procedures.

\subsection{Framework}
We consider a setting where we test $n$ null hypotheses, denoted by $\H_1, \dots, \H_n$.  The test for $\H_i$ yields a P-value, denoted as $P_i$, and we assume (for simplicity) that these P-values are independent.  
In this context, a multiple testing procedure $\R$ takes the P-values, $\bP = (P_1, \dots, P_n)$, and returns a subset of indices representing the null hypotheses that the procedure rejects. 
\tabref{outcomes} describes the outcome when applying some significance rule in such a setting and defines some necessary notations.
We will let $\Null \subset [n]$ index the true null hypotheses.
 
Given such a procedure $\R$, the false discovery rate is defined as the expected value of the false discovery proportion \citep{benjamini1995controlling}, namely
\beqn
\fdr(\R) = \E(\fdp(\R)), \quad \text{where } \fdp(\R) := \frac{V_\R}{R_\R \vee 1},
\eeq
While the FDR of a multiple testing procedure is analogous to the type I error rate of a test procedure, the false non-discovery rate (FNR) plays the role of type II error rate and is here defined as the expected value of the false non-discovery proportion\footnote{ This definition is different from that of \cite{genovese2002operating}.}, namely
\beqn
\fnr(\R) = \E(\fnp(\R)), \text{where } \quad \fnp(\R) := \frac{T_\R}{n_1}.
\eeq
Note that this definition is different that than introduced in \cite{genovese2002operating}, although there is no substantial difference.

\begin{table}[t!]
	\caption{This table summarizes the outcome of applying a multiple testing procedure $\R$ to a particular situation involving $n$ null hypotheses.}
	\medskip
	\centering
	\begin{tabular}{c c c c} 
		& accept null & reject null & total \\ [0.5ex]
		\midrule
		null true & $U_\R$ & $V_\R$ & $n_0$ \\
		null false & $T_\R$ & $S_\R$ & $n_1$\\
		total & $W_\R$ & $R_\R$ & $n$ \\[0.5ex]
		\hline
	\end{tabular}
	\label{tab:outcomes}
\end{table}

\subsection{Threshold procedures}
Threshold procedures are of the form
\beq\label{threshold}
\R(\bP) = \big\{i : P_i \le \tau(\bP)\big\},
\eeq
where $\tau$ is some (measurable) function with values in $[0,1]$.  As we stated earlier, most multiple testing procedures are of this form, including the BH method.  Specifically, following \cite{storey2002direct}, we may describe the BH method as follows.
For  $0 \le t \le 1$, define the following quantities (see \tabref{outcomes}), 
\beqn
V(t) = \# \{i \in \Null: P_i \le t \}, \quad
S(t) = \# \{i \notin \Null: P_i \le t \},
\eeq
and
\beqn
R(t) = V(t) + S(t) = \# \{i: P_i \le t \},
\eeq
as well as 
\beqn
\fdr(t) = \E \bigg(\frac{V(t)}{R(t) \vee 1}\bigg).
\eeq
This is the FDR of the procedure with rejection region $[0, t]$.  It is estimated by replacing $V(t)$ by $n t$, justified by the fact that $\E(V(t)) \le n_0 t \le n t$.  (The first inequality is an equality when all the null P-values are uniformly distributed in $[0,1]$.) 
This yields
\beqn
\widehat{\fdr}(t) = \frac{n t}{R(t) \vee 1},
\eeq
and the BH method may be defined via the threshold,
\beqn
\hat\tau_\diamond = \max\big\{ t : \widehat{\fdr}(t) \le \alpha\},
\eeq
if it is desired to control the FDR at $\alpha \in (0,1)$.

\subsection{Scan procedures} \label{sec:scan}
Effectively, threshold procedures examine intervals of the form $[0, t]$, where $t \in [0,1]$.  
We extend this family of procedures by considering all possible intervals, thus defining scan procedures as those of the form
\beq\label{interval}
\R(\bP) = \{i : \sigma(\bP) \le P_i \le \tau(\bP)\},
\eeq
where $\sigma$ and $\tau$ are some (measurable) functions with values in $[0,1]$ and such that $\sigma \le \tau$ pointwise.  
Within this family of procedures, we define a specific procedure in analogy with the definition of the BH method given above.

For  $0 \le s \le t \le 1$, define the following quantities (see \tabref{outcomes}),
\beqn
V(s,t) = \# \{i \in \Null: s \le P_i \le t \}, \quad
S(s,t) = \# \{i \notin \Null: s \le P_i \le t \},
\eeq
and
\beqn
R(s,t) = V(s,t) + S(s,t) = \# \{i: s \le P_i \le t \},
\eeq
as well as 
\beqn
\fdr(s,t) = \E(\fdp(s,t)), \quad \text{where } \fdp(s,t) := \frac{V(s,t)}{R(s,t) \vee 1}.
\eeq
This is the FDR of the procedure with rejection region $[s, t]$, which we estimate by replacing $V(s,t)$ with $n (t-s)$, which bounds its expectation, obtaining
\beqn
\widehat{\fdr}(s,t) = \frac{n(t-s)}{R(s,t) \vee 1},
\eeq
and our scan procedure is defined via the interval
\beq\label{scan}
(\hat\sigma, \hat\tau) = \argmax\big\{t-s: \widehat{\fdr}(s,t) \le \alpha\big\},
\eeq
assuming, again, that we desire to control the FDR at $\alpha \in (0,1)$.  If there are several maximizing intervals, we choose the left-most interval.

\begin{rem}
By construction, relying on basic properties of the function $\widehat{\fdr}$, we have that $\hat\sigma$ and $\hat\tau$ correspond to P-values, and 
\beq\label{fdr-alpha}
\widehat{\fdr}(\hat\sigma, \hat\tau) \le \alpha.
\eeq 
\end{rem}

\subsection{Contribution and contents}

In this paper, following \citep{storey2002direct, storey2004strong, genovese2002operating}, we consider an asymptotic setting where the scan procedure just defined is indeed able to control the FDR as desired.  
In the same framework, we also compare, in terms of FNR, the scan procedure with BH procedure, showing that the former is superior to the latter under some specific circumstances, including in power-law location models.

The rest of the paper is organized as follows. 
In \secref{FDR-results} we consider our scan procedure's ability to control the  FDR.  This is established in an asymptotic setting.  
In \secref{FNR-results} we analyze the asymptotic FNR of our scan procedure and compare it with that of the BH procedure.  In particular, we derive sufficient conditions under which the scan procedure outperforms the BH procedure. 
We present the results of numerical experiments in \secref{numerics}.
\secref{discussion} is a brief discussion section. 
All proofs are gathered in \secref{proofs}.

\section{False discovery rate} \label{sec:FDR-results}
In this section we examine how the scan procedure defined in \secref{scan} is able to control the false discovery rate (FDR).
We start with the following result, which shows that $\widehat{\fdr}(s,t)$ is a conservative point estimate of $\fdr(s,t)$ under any configurations as long as those null hypotheses are uniformly distributed.  

\begin{thm}\label{thm:conservative}
Suppose the P-values corresponding to true null hypotheses are uniformly distributed in $[0,1]$.  Then, for any fixed $s \le t$, 
\beqn
\E[\widehat{\fdr}(s,t)] \ge \fdr(s,t).
\eeq
\end{thm}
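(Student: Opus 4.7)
The plan is to show that the estimator's bias comes entirely from (a) the fact that non-null P-values contribute to $n(t-s)$ but not to the numerator of $\fdp$, and (b) a negative correlation between $V(s,t)$ and $1/(R(s,t) \vee 1)$.

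First I would rewrite the quantity of interest as
\beqn
\widehat{\fdr}(s,t) - \fdp(s,t) \;=\; \frac{n(t-s) - V(s,t)}{R(s,t) \vee 1},
\eeq
and split the numerator using $n(t-s) = n_0(t-s) + n_1(t-s)$ to get
\beqn
\E\bigl[\widehat{\fdr}(s,t)\bigr] - \fdr(s,t) \;=\; \E\!\left[\frac{n_0(t-s) - V(s,t)}{R(s,t) \vee 1}\right] + n_1(t-s)\,\E\!\left[\frac{1}{R(s,t) \vee 1}\right].
\eeq
The second term is manifestly non-negative, so the task reduces to proving that the first expectation is non-negative.

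Next, I would condition on the non-null P-values, $\{P_i : i \notin \Null\}$. Under this conditioning, $S(s,t)$ becomes a deterministic constant $S$, while $V(s,t)$ is a sum of $n_0$ independent $\text{Bernoulli}(t-s)$ variables (since the true null P-values are $\text{Uniform}[0,1]$ by hypothesis and independent of each other and of the alternatives). In particular $\E[V(s,t)] = n_0(t-s)$, so I want to show
\beqn
\E\!\left[\frac{V(s,t)}{(V(s,t)+S)\vee 1}\right] \;\le\; n_0(t-s)\,\E\!\left[\frac{1}{(V(s,t)+S)\vee 1}\right].
\eeq
This is a covariance inequality: the map $v \mapsto v$ is non-decreasing and $v \mapsto 1/((v+S)\vee 1)$ is non-increasing, so the two random variables $V(s,t)$ and $1/(R(s,t)\vee 1)$ are anti-monotone functions of $V(s,t)$. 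By Chebyshev's sum inequality (equivalently, a one-variable FKG/rearrangement argument), their covariance is non-positive, giving exactly the displayed inequality.

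Unconditioning on the non-null P-values preserves the inequality (since it holds pointwise on the conditioning variable), and combining with the non-negative $n_1(t-s)$ term yields $\E[\widehat{\fdr}(s,t)] \ge \fdr(s,t)$. The only delicate point is the covariance step; it becomes clean once one reduces to the conditional distribution, where $S$ is a constant and the monotonicity structure in $V(s,t)$ is transparent.
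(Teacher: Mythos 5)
Your proof is correct, but it takes a genuinely different route from the paper's. Both arguments exploit the same structure — independence of $V(s,t)$ and $S(s,t)$, with $V(s,t)$ conditionally $\mathrm{Binomial}(n_0, t-s)$ once the non-null P-values are fixed — but the key inequality differs. The paper keeps the numerator $n(t-s) - V(s,t)$ intact, discards the event $\{V(s,t)=0\}$ via an indicator to replace $R(s,t)\vee 1$ by $R(s,t)$, and then applies Jensen's inequality conditionally on $S(s,t)$, using the convexity of $v \mapsto (a-v)/(b+v)$ for $v \ge 0$; the conclusion follows because the resulting expression equals $\E[(n-n_0)(t-s)/(n_0(t-s)+S(s,t))] \ge 0$. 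You instead split $n(t-s) = n_0(t-s) + n_1(t-s)$, observe that the $n_1$ piece contributes a manifestly non-negative term, and handle the $n_0$ piece by the one-dimensional Chebyshev/FKG association inequality: since $v \mapsto v$ is non-decreasing and $v \mapsto 1/((v+S)\vee 1)$ is non-increasing, $\E\big[V/(R\vee 1)\big] \le \E[V]\,\E\big[1/(R\vee 1)\big]$ conditionally on the non-null P-values. Your route buys a cleaner treatment of the $\vee 1$ (no indicator manipulation needed) and makes the source of conservativeness explicit as two separate effects (the $n_1$ overcount and the negative association); the paper's route is slightly more compact but requires checking convexity and the indicator step. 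Both are valid under the stated independence assumption, which is what licenses your conditioning.
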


Large scale multiple testing appears in many areas of applications, where $n$ is typically of the order of tens or hundreds of thousand.  This has led to the consideration of an asymptotic setting where $n$ tends to infinity \citep{storey2002direct, storey2004strong, genovese2002operating}. 
In detail, the asymptotic framework we consider requires the almost sure pointwise convergence of the empirical distribution of the null P-values and of the empirical distribution of the non-null P-values, or in formula,
\beq \label{pointlimit}
\lim_{n \to \infty} \frac{V(s,t)}{n_0} = t-s \quad \text{and} \quad \lim_{n \to \infty} \frac{S(s,t)}{n_1} = G(t) - G(s),
\eeq
almost surely for any fixed $0 \le s<t\le 1$, where $G$ is a continuous distribution function on the real line.  
We assume in addition that the following limit exists,
\beq \label{fraction}
\pi_0 := \lim_{n \to \infty} \frac{n_0}{n} \in (0,1), \quad \pi_1 := 1 - \pi_0.
\eeq
For the remaining results, we assume that Conditions \eqref{pointlimit}-\eqref{fraction} hold. Note that these conditions were also assumed in \citep{storey2004strong}.

\begin{rem}
This asymptotic framework generalizes the Bayesian model where the null hypotheses are true with probability $\pi_0$ and not true with probability $\pi_1$, and the null P-values are uniform in $[0,1]$ and the non-null P-values are $G$-distributed, corresponding to a mixture model where the P-values are iid with distribution function $\pi_0 t + \pi_1 G(t)$.
\end{rem}

Define 
\beqn
\overline{\fdr}^\infty(s,t) = \frac{t-s}{\pi_0 (t-s) + \pi_1 (G(t) - G(s))},
\eeq
which is the pointwise (almost sure) limit of $\widehat{\fdr}(s,t)$ under the above assumptions. Our next result shows that the scan procedure controls the FDR asymptotically.
Here, and everywhere else in the paper, $\alpha$ will denote the level at which the FDR is to be controlled.
We make the dependency of $(\hat\sigma, \hat\tau)$ on $n$ explicit, but note that other quantities, such as $\widehat\fdr, \fdp, \fdr$, also depend on $n$.

\begin{thm}\label{thm:FDRcontrol}
We have
\beqn
\limsup_{n \to \infty}\, \fdp(\hat\sigma_n, \hat\tau_n) \le \alpha, \quad \text{almost surely}, \quad \text{and} \quad
\limsup_{n \to \infty}\, \E\big[\fdp(\hat\sigma_n, \hat\tau_n)\big] \le \alpha.
\eeq
\end{thm}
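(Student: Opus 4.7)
The plan is to combine two ingredients: (i) the built-in guarantee $\widehat{\fdr}(\hat\sigma_n,\hat\tau_n) \le \alpha$ from the remark following \eqref{scan}, which reads $n(\hat\tau_n - \hat\sigma_n) \le \alpha R(\hat\sigma_n,\hat\tau_n)$; and (ii) a uniform control of the random count $V(s,t)$ by its asymptotic mean $n_0(t-s)$. For (ii), I would first observe that the pointwise limits in \eqref{pointlimit} apply to the monotone empirical distribution functions $F_n^{(0)}(t) := V(0,t)/n_0$ and $F_n^{(1)}(t) := S(0,t)/n_1$, whose respective limits $t$ and $G(t)$ are continuous; a P\'olya-type argument then upgrades pointwise to uniform convergence, producing a sequence $\varepsilon_n \to 0$ almost surely with
\beqn
\sup_{0 \le s \le t \le 1} |V(s,t) - n_0(t-s)| \le \varepsilon_n n_0 \quad \text{and} \quad \sup_{0 \le s \le t \le 1} |R(s,t) - n H(s,t)| \le \varepsilon_n n,
\eeq
where $H(s,t) := \pi_0 (t-s) + \pi_1 (G(t) - G(s))$.

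Next, I would combine (i) and (ii). On $\{R(\hat\sigma_n, \hat\tau_n) = 0\}$ the FDP vanishes outright, and otherwise
\beqn
\fdp(\hat\sigma_n, \hat\tau_n) = \frac{V(\hat\sigma_n, \hat\tau_n)}{R(\hat\sigma_n, \hat\tau_n)} \le \frac{n_0(\hat\tau_n - \hat\sigma_n) + \varepsilon_n n_0}{R(\hat\sigma_n, \hat\tau_n)} \le \frac{n_0}{n}\,\alpha + \frac{\varepsilon_n n_0}{R(\hat\sigma_n, \hat\tau_n)}.
\eeq
Since $n_0/n \to \pi_0 < 1$, the almost-sure conclusion $\limsup_n \fdp(\hat\sigma_n, \hat\tau_n) \le \alpha$ reduces to showing that the error term $\varepsilon_n n_0 / R(\hat\sigma_n, \hat\tau_n)$ vanishes almost surely.

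This last point is where I expect the main difficulty. In the generic case where some interval $(s_\ast, t_\ast)$ satisfies $\overline{\fdr}^\infty(s_\ast, t_\ast) < \alpha$, uniform convergence makes $(s_\ast, t_\ast)$ eventually feasible for the scan, so maximality in \eqref{scan} forces $\hat\tau_n - \hat\sigma_n \ge t_\ast - s_\ast$; the constraint from (i) then yields $R(\hat\sigma_n, \hat\tau_n) \ge n(t_\ast - s_\ast)/\alpha$, which is of order $n$, and the error is $O(\varepsilon_n) \to 0$. The hard part will be the degenerate regime where $\overline{\fdr}^\infty(s,t) \ge \alpha$ for every $s < t$: the chosen interval can shrink to zero length and $R(\hat\sigma_n, \hat\tau_n)$ need not grow at rate $n$. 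Handling this calls for a sharper control of $V(s,t)$ on short intervals---for instance via a dyadic discretization in length scale combined with a localized concentration bound---to show that either the error term still vanishes or $V(\hat\sigma_n, \hat\tau_n) = 0$ outright.

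Finally, the expectation statement follows from the almost-sure bound and the uniform boundedness $\fdp \in [0,1]$ by an application of the reverse Fatou lemma.
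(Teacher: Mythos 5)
Your decomposition is the same one the paper uses: express $\fdp(\hat\sigma_n,\hat\tau_n)$ through $V(\hat\sigma_n,\hat\tau_n)$ and $R(\hat\sigma_n,\hat\tau_n)$, replace $V$ by $n_0(\hat\tau_n-\hat\sigma_n)$ up to a uniform (Glivenko--Cantelli/P\'olya) error, and invoke the built-in constraint $n(\hat\tau_n-\hat\sigma_n)\le\alpha\,R(\hat\sigma_n,\hat\tau_n)$ from \eqref{fdr-alpha}; the reverse Fatou step for the expectation is also exactly what the paper does. Your ``generic case'' is complete and correct, and it is essentially the situation guaranteed by assumption \eqref{A}: once some interval with $\overline{\fdr}^\infty(s_*,t_*)<\alpha$ is eventually feasible, maximality forces $\hat\tau_n-\hat\sigma_n$ to stay bounded away from $0$, hence $R(\hat\sigma_n,\hat\tau_n)\gtrsim n$ and your error term vanishes.

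The genuine gap is the degenerate regime you flag, and it cannot be closed by the fix you sketch. Suppose $\overline{\fdr}^\infty(s,t)\ge\alpha$ for all $s<t$ (for instance $G(t)=t$, alternatives indistinguishable from nulls, with $\pi_0>\alpha$). Then every interval of fixed positive length is eventually infeasible and the scan settles on the longest accidental cluster: by the Erd\H{o}s--R\'enyi law for increments of the empirical process, the longest window satisfying $R(s,t)\ge n(t-s)/\alpha$ has length of order $\log n/n$ and contains of order $\log n$ P-values, whose null fraction is $\approx n_0/n\to\pi_0$ because positions are independent of labels. So $\fdp(\hat\sigma_n,\hat\tau_n)$ tends to $\pi_0>\alpha$: the error term does not vanish, $V(\hat\sigma_n,\hat\tau_n)\ne 0$, and no localized concentration bound or dyadic chaining will rescue the argument --- your proof genuinely requires something like \eqref{A}, whereas the theorem is stated unconditionally. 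For comparison, the paper closes this step via \lemref{uniform}, the uniform claim $\liminf_n\inf_{s\le t}\{\widehat{\fdr}(s,t)-\fdp(s,t)\}\ge 0$, which formally sidesteps any lower bound on $R(\hat\sigma_n,\hat\tau_n)$; but its proof reduces matters to $\liminf_n\inf_{s\le t}\{t-s-V(s,t)/n\}\ge 0$ without controlling the denominator $R(s,t)\vee 1$, and the uniform claim even fails at degenerate intervals (at $s=t=P_i$ with $i\in\Null$ one gets $\widehat{\fdr}-\fdp=-1$). In short, the difficulty you isolated is real rather than an artifact of your route; the clean resolution is to prove the statement under \eqref{A}, or after restricting the scan to intervals of length at least some fixed $d>0$, which is precisely what your generic-case argument already accomplishes.
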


\begin{rem}
In our notation, $\fdr(\hat\sigma_n, \hat\tau_n)$ is random and different from $\E\big[\fdp(\hat\sigma_n, \hat\tau_n)\big]$.  The latter is the FDR of the scan procedure with rejection region $[\hat\sigma_n, \hat\tau_n]$.
\end{rem}

We consider the maximization in \eqref{scan}, but based on $\overline{\fdr}^\infty$.  Indeed, let $\cA$ be the set of maximizers and $\delta$ the value of the following optimization problem
\beqn
\max\big\{t-s: \overline{\fdr}^\infty(s,t) \le \alpha\big\},
\eeq
or, equivalently, 
\beqn
\max\big\{t-s: G(t) - G(s) = \beta (t-s)\big\},
\eeq
where $\beta := \frac{1}{\pi_1} (\tfrac{1}{\alpha} - \pi_0)$.

\beq\label{A}
\begin{tabular}{p{0.8\textwidth}}
We assume that $\delta > 0$ and that there is $(s, t) \in \cA$ such that $u \mapsto \overline{\fdr}^\infty(u, t)$ strictly decreasing at $u = s$ or that $u \mapsto \overline{\fdr}^\infty(s, u)$ strictly increasing at $u = t$.
\end{tabular}
\eeq
The strict monotonicity condition is true, for example, if $G$ is concave on $[0,1]$, or more generally if $G$ is differentiable as satisfies $G'(s) \vee G'(t) < (G(t) - G(s))/(t-s)$ at some $(s,t) \in \cA$.

\begin{thm}\label{thm:limit}
If \eqref{A} holds, then, almost surely, any accumulation point of $(\hat\sigma_n, \hat\tau_n)$ belongs to $\cA$.
\end{thm}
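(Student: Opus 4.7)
The plan is to combine a locally uniform convergence statement for $\widehat{\fdr} \to \overline{\fdr}^\infty$ with a consistency-of-$M$-estimator argument that brackets $\hat\tau_n - \hat\sigma_n$ against $\delta$. For the uniform convergence, I would observe that the empirical CDFs of the null and non-null P-values are monotone in their argument and, by~\eqref{pointlimit}, converge pointwise almost surely to the continuous limits $t$ and $G(t)$ respectively; Polya's theorem then promotes both to uniform convergence on $[0,1]$. This yields $R(s,t)/n \to \pi_0(t-s) + \pi_1(G(t)-G(s))$ uniformly on $[0,1]^2$. Since the right-hand side is bounded below by $\pi_0 \eta$ on the set $\{(s,t):\ 0 \le s \le t \le 1,\ t-s \ge \eta\}$, dividing shows $\widehat{\fdr}(s,t) \to \overline{\fdr}^\infty(s,t)$ uniformly on that set, for every $\eta > 0$.

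Next I would establish $\liminf_n (\hat\tau_n - \hat\sigma_n) \ge \delta$. Fix $(s^*, t^*) \in \cA$ witnessing~\eqref{A}; say $u \mapsto \overline{\fdr}^\infty(u, t^*)$ is strictly decreasing at $u = s^*$ (the other case is symmetric). Strict decrease yields $\overline{\fdr}^\infty(s^* + \epsilon, t^*) < \overline{\fdr}^\infty(s^*, t^*) \le \alpha$ for all sufficiently small $\epsilon > 0$, and uniform convergence on $\{t - s \ge \delta/2\}$ then forces $\widehat{\fdr}(s^* + \epsilon, t^*) \le \alpha$ for all $n$ large. Hence $(s^* + \epsilon, t^*)$ is feasible in the optimization~\eqref{scan} defining the scan procedure, so $\hat\tau_n - \hat\sigma_n \ge \delta - \epsilon$ eventually; letting $\epsilon \downarrow 0$ closes this step.

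Then I would identify the accumulation points. For any subsequence along which $(\hat\sigma_{n_k}, \hat\tau_{n_k}) \to (\sigma^\infty, \tau^\infty)$, the previous step gives $\tau^\infty - \sigma^\infty \ge \delta > 0$, so the subsequence eventually lies in $\{t - s \ge \delta/2\}$ where uniform convergence holds. Combined with the bound $\widehat{\fdr}(\hat\sigma_n, \hat\tau_n) \le \alpha$ from~\eqref{fdr-alpha}, passing to the limit yields $\overline{\fdr}^\infty(\sigma^\infty, \tau^\infty) \le \alpha$, so $(\sigma^\infty, \tau^\infty)$ is feasible in the limiting problem; hence $\tau^\infty - \sigma^\infty \le \delta$, and combining this with the reverse inequality places $(\sigma^\infty, \tau^\infty)$ in $\cA$.

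The hard part will be the lower-bound step. Without the strict monotonicity in~\eqref{A}, $\widehat{\fdr}$ can oscillate around $\alpha$ near the boundary of the limiting feasible set, and there is no a priori way to certify, for large $n$, a nearby feasible pair whose interval length is close to $\delta$; conceivably $\cA$ could be a genuine plateau of constraint-tight points and the finite-$n$ optimizer could collapse to a shorter interval. Assumption~\eqref{A} is tailored precisely to furnish a nearby point with a definite safety margin below $\alpha$, which the locally uniform convergence from the first step then transfers to the finite-$n$ optimization.
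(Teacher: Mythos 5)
Your proposal is correct and follows essentially the same route as the paper: the paper's Lemma~\ref{lem:infty-conv} is your uniform-convergence step (also deduced from monotonicity plus pointwise a.s.\ convergence to a continuous limit, with the denominator bounded below on $\{t-s\ge d\}$), its Lemma~\ref{lem:point-lb} is your lower-bound step exploiting the strict monotonicity in \eqref{A} to produce a strictly feasible nearby interval, and the final identification of accumulation points via \eqref{fdr-alpha} and continuity of $\overline{\fdr}^\infty$ is identical. No gaps.
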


\begin{rem}
This result is analogous to Theorem~1 in \citep{genovese2002operating}, which establishes a similar limit for the BH method under similar conditions.  
Specifically, they show that, almost surely, $\hat\tau_{\diamond, n}$ converges 
to 
\beq\label{BH-tau}
\delta_\diamond := \max\big\{ t : \overline\fdr^\infty(t) \le \alpha\},
\eeq 
with $\overline\fdr^\infty(0, t)$.
Alternatively, $\delta_\diamond$ may also be defined as the right-most solution to the equation $G(t) = \beta t$.
\end{rem}

\section{False non-discovery rate} \label{sec:FNR-results}
Having established that the scan procedure asymptotically controls the FDR at the desired level, we now turn to examining its false non-discovery rate (FNR).  We do so under the same asymptotic framework.  


\begin{thm}\label{thm:FNR}
If \eqref{A} holds, then
\beqn
\lim_{n \to \infty} \E\big[\fnp(\hat\sigma_n, \hat\tau_n)\big] = 1 - \beta \delta.
\eeq
\end{thm}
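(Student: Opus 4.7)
The plan is to reduce the claim to a statement about the non-null inclusion count $S(\hat\sigma_n, \hat\tau_n)$. Since the rejection region $[\hat\sigma_n, \hat\tau_n]$ contains exactly $S(\hat\sigma_n, \hat\tau_n)$ non-null P-values, we have $\fnp(\hat\sigma_n, \hat\tau_n) = 1 - S(\hat\sigma_n, \hat\tau_n)/n_1$, so it is enough to establish $S(\hat\sigma_n, \hat\tau_n)/n_1 \to \beta \delta$ almost surely; the limit in expectation then follows from bounded convergence, since $\fnp \in [0,1]$.

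The first step is to pin down $G(t^*) - G(s^*)$ at every $(s^*, t^*) \in \cA$. Rewriting the constraint $\overline{\fdr}^\infty(s,t) \le \alpha$ as $G(t) - G(s) \ge \beta (t - s)$, I will argue that the inequality must in fact be an equality at any maximizer. Otherwise, continuity of $G$ would permit a small extension of the interval (moving $s^*$ slightly left if $s^* > 0$, or $t^*$ slightly right if $t^* < 1$) that preserves the constraint while strictly increasing $t - s$, contradicting maximality of $\delta$. The degenerate configuration $s^* = 0$ and $t^* = 1$ is excluded because it would force $1 = G(1) - G(0) \ge \beta$, i.e., $\alpha \ge 1$, contradicting $\alpha \in (0,1)$. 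Hence $G(t^*) - G(s^*) = \beta \delta$ for every $(s^*, t^*) \in \cA$.

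The second step is to upgrade the pointwise convergence in \eqref{pointlimit} to uniform convergence in $(s,t)$. Applied to the non-null empirical distribution $t \mapsto S(0,t)/n_1$, which is non-decreasing and converges pointwise almost surely to the continuous limit $G$, P\'olya's theorem yields $\sup_{0 \le t \le 1} |S(0,t)/n_1 - G(t)| \to 0$ almost surely, and therefore $\sup_{0 \le s \le t \le 1} |S(s,t)/n_1 - (G(t) - G(s))| \to 0$ almost surely. Combining with Theorem~\ref{thm:limit}, along any subsequence on which $(\hat\sigma_n, \hat\tau_n) \to (s^*, t^*) \in \cA$, continuity of $G$ gives $S(\hat\sigma_n, \hat\tau_n)/n_1 \to G(t^*) - G(s^*) = \beta \delta$. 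A standard subsequence extraction on the compact set $[0,1]^2$ (every subsequence of $(\hat\sigma_n, \hat\tau_n)$ has a further convergent subsequence, all of whose limits produce the same value $\beta\delta$) then upgrades this to convergence of the full sequence.

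I expect the main obstacle to be the binding-constraint step: verifying that no element of $\cA$ can satisfy the constraint strictly requires a careful case analysis of which endpoint can be perturbed, together with the observation that $\beta > 1$ whenever $\alpha \in (0,1)$. Once that is in place, the remainder assembles Theorem~\ref{thm:limit}, the P\'olya-type uniform convergence, and bounded convergence in a routine way.
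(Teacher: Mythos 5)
Your proof is correct and follows essentially the same route as the paper's: reduce to $S(\hat\sigma_n,\hat\tau_n)/n_1 \to \beta\delta$ via the identity $\fnp = 1 - S/n_1$, combine the uniform (P\'olya/Glivenko--Cantelli) convergence of $S(s,t)/n_1$ with Theorem~\ref{thm:limit} and a compactness/subsequence argument, and finish with bounded convergence. The only difference is that you explicitly verify the binding-constraint fact $G(t^*)-G(s^*)=\beta\delta$ on $\cA$ (using $\beta>1$ to rule out the degenerate endpoint case), which the paper takes for granted through its ``equivalent'' reformulation of the optimization problem; that is a welcome, correct addition rather than a deviation.
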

As could be anticipated from \thmref{limit}, the limiting value is the asymptotic FNR of any deterministic rule given by an interval $[s, t]$ with $(s, t) \in \cA$.

\begin{rem}
This result is analogous to Theorem~3 in \citep{genovese2002operating}, which establishes a similar limit for the BH method, specifically,
\beqn
\lim_{n\to\infty} \E\big[\fnp(\hat\tau_{\diamond, n})\big] = 1 - \beta \delta_\diamond.
\eeq 
\end{rem}

We now turn our attention to comparing the scan method and the BH method.  The following theorem provides some sufficient conditions under which the scan procedure outperforms the BH procedure.

\begin{thm}\label{thm:outperform}
Assume that $G$ is differentiable. 
If \eqref{A} holds, and in addition
\beq \label{property1}
G'(0) < G'(\delta_\diamond), 
\eeq
then the scan procedure has strictly smaller asymptotic FNR than the BH procedure.
\end{thm}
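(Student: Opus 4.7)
The plan is to reduce the comparison to the strict inequality $\delta > \delta_\diamond$ between the two limiting thresholds, and then to produce this strict inequality by a first-order perturbation of the boundary point $(0,\delta_\diamond)$ within the scan feasibility region. By Theorem~\ref{thm:FNR}, the asymptotic FNR of the scan procedure equals $1 - \beta\delta$, and by the remark following Theorem~\ref{thm:FNR} (quoting Theorem~3 in \citep{genovese2002operating}), the asymptotic FNR of the BH procedure equals $1 - \beta\delta_\diamond$. Since $\alpha \in (0,1)$ and $\pi_0 \in (0,1)$, the constant $\beta = \frac{1}{\pi_1}(\tfrac{1}{\alpha} - \pi_0)$ is strictly positive, so it suffices to show $\delta > \delta_\diamond$.

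Working with the auxiliary function $F(t) := G(t) - \beta t$, the scan feasibility constraint rewrites as $F(t) \ge F(s)$, and the definition of $\delta_\diamond$ in \eqref{BH-tau} gives $F(0) = F(\delta_\diamond) = 0$. Therefore $(0, \delta_\diamond)$ is feasible for the scan optimization and $\delta \ge \delta_\diamond$. Because $\delta_\diamond$ is the right-most zero of $F$ and $F(t) < 0$ for $t > \delta_\diamond$, differentiability of $G$ forces $F'(\delta_\diamond) \le 0$, i.e.\ $G'(\delta_\diamond) \le \beta$; combined with the hypothesis $G'(0) < G'(\delta_\diamond)$, this also yields $G'(0) < \beta$.

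I would then consider the perturbed interval $(\epsilon, \delta_\diamond + \eta)$ for small $\epsilon, \eta > 0$. By Taylor expansion at $0$ and at $\delta_\diamond$,
\beqn
F(\delta_\diamond + \eta) - F(\epsilon) = \eta\bigl(G'(\delta_\diamond) - \beta\bigr) - \epsilon\bigl(G'(0) - \beta\bigr) + o(\eta) + o(\epsilon).
\eeq
Setting $\epsilon = r\eta$ for a constant $r \in (0,1)$ to be chosen, the leading coefficient in $\eta$ becomes $(G'(\delta_\diamond) - \beta) - r(G'(0) - \beta)$, which is strictly positive exactly when $r > (\beta - G'(\delta_\diamond))/(\beta - G'(0))$. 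The hypothesis $G'(0) < G'(\delta_\diamond) \le \beta$ makes this threshold strictly smaller than $1$, so admissible ratios exist inside $(0,1)$. Fixing such an $r$ and taking $\eta$ sufficiently small produces a feasible interval of length $\delta_\diamond + (1-r)\eta > \delta_\diamond$, proving $\delta > \delta_\diamond$.

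The main subtlety is the degenerate case $G'(\delta_\diamond) = \beta$: the admissibility threshold collapses to $0$, so any $r \in (0,1)$ works, the positivity of the leading coefficient being guaranteed by $G'(0) < \beta$ alone, and the argument goes through uniformly without requiring any second-order information about $G$. One should also verify that the $o(\eta) + o(\epsilon)$ remainders do not swamp the leading order under the joint scaling $\epsilon = r\eta$, but this is routine once $r$ is fixed before $\eta$ is sent to zero.
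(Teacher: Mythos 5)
Your proof is correct and follows the same overall strategy as the paper's: both reduce the FNR comparison to showing $\delta > \delta_\diamond$ via Theorem~\ref{thm:FNR} and its BH analogue, and both then exhibit a feasible interval for the limiting scan problem strictly longer than $[0,\delta_\diamond]$ by perturbing its endpoints, exploiting $G'(0) < G'(\delta_\diamond) \le \beta$. The execution differs, though. The paper translates the secant line $y=\beta x$ downward by a small distance $d$ and compares the leftmost and rightmost intersections of this translate with the graph of $G$ via trigonometric asymptotics ($s_d \sim \sin(A_0)\,d/\sin\theta$ and $t_d - \delta_\diamond \sim \sin(A_\diamond)\,d/\sin\phi$), whereas you perturb the endpoints directly under the coupled scaling $\epsilon = r\eta$ and run a first-order Taylor expansion of $F(t) = G(t)-\beta t$. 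Your version is more elementary, and it handles the tangency case $G'(\delta_\diamond)=\beta$ explicitly and cleanly, where the paper's $\sin\phi$ denominator degenerates ($\phi=0$) and the stated asymptotic for $t_d-\delta_\diamond$ must be read as ``superlinear in $d$'' rather than as a literal equivalence. Two minor points worth adding: state that $G(0)=0$ (so $F(0)=0$), which holds because $G$ is a continuous distribution function of P-values supported on $[0,1]$; and note that $\delta_\diamond>0$ under \eqref{property1} (otherwise the hypothesis $G'(0)<G'(\delta_\diamond)$ is vacuous), so the perturbed interval stays inside $[0,1]$ for small $\eta$.
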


The BH procedure is known to be optimal in various ways under generalized Gaussian location models \citep{ariaschen2016distribution, rabinovich2017optimal}.  We therefore consider power-law location models.  
More specifically, we consider a mixture model where 
\beq\label{X}
X_1, \dots, X_n \iid \pi_0 \Psi(x) + \pi_1 \Psi(x - \mu),
\eeq 
where $\Psi$ is a continuous distribution on the real line and $\mu > 0$.  These are meant to represent the test statistics, whose large values weigh against their respective null hypotheses.  In particular, $\Psi$ is the null distribution and $\mu$ is the effect size.  The P-values are then computed as usual, meaning $P_i = \bar\Psi(X_i)$ where $\bar\Psi := 1 - \Psi$, and are seen to follow a mixture model
\beq\label{mixture}
P_1, \dots, P_n \iid \pi_0 t + \pi_1 G(t), \quad \text{where } G(t) := \bar\Psi(\bar\Psi^{-1}(t) - \mu).
\eeq

\begin{thm} \label{thm:propcondition}
Consider a mixture model \eqref{X} in the asymptotic defined by \eqref{fraction}.
Then the condition \eqref{pointlimit} holds.
Assume in addition that $\Psi$ has a density $\psi$ which can be taken to be strictly positive everywhere and such that $\psi(x) \to 0$ as $x \to \infty$ and $\psi(x) \sim x^{-\gamma-1} (\log x)^c$ as $x \to \infty$ for some $\gamma > 0$ and some $c \in \bbR$.  
Then there is $\mu_0 > 0$ (depending on $\Psi$ and $\beta$) such that \eqref{property1} holds for all $\mu > \mu_0$.  
\end{thm}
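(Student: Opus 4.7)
\emph{Proof plan.}
The statement contains two independent assertions, which I would handle in turn.

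For \eqref{pointlimit}: conditioning on the null/non-null assignments, the null P-values form an i.i.d.\ uniform sample on $[0,1]$ and the non-null P-values form an i.i.d.\ sample with continuous CDF $G$. Applying the strong law of large numbers pointwise to each indicator $\mathbf{1}\{s\le P_i\le t\}$ (equivalently, Glivenko--Cantelli on each subsample) yields the two almost-sure pointwise limits $V(s,t)/n_0 \to t-s$ and $S(s,t)/n_1 \to G(t)-G(s)$.

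For \eqref{property1}: the plan is to compute $G'(0^+)$ and $G'(\delta_\diamond)$ in the large-$\mu$ regime, using the chain-rule identity obtained from $G(t)=\bar\Psi(\bar\Psi^{-1}(t)-\mu)$ and $(\bar\Psi^{-1})'(t)=-1/\psi(\bar\Psi^{-1}(t))$:
\[
G'(t) \;=\; \frac{\psi\bigl(\bar\Psi^{-1}(t)-\mu\bigr)}{\psi\bigl(\bar\Psi^{-1}(t)\bigr)}.
\]
First I would evaluate $G'(0^+)$. Setting $y := \bar\Psi^{-1}(t)$, we have $y\to\infty$ as $t\downarrow 0$. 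The assumed power-law asymptotic $\psi(y)\sim y^{-\gamma-1}(\log y)^c$, together with $(y-\mu)/y\to 1$ and $\log(y-\mu)/\log y\to 1$, gives $\psi(y-\mu)/\psi(y)\to 1$. Hence $G'(0^+) = 1$.

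Next I would locate $\delta_\diamond$ asymptotically. Karamata's theorem gives $\bar\Psi(x)\sim x^{-\gamma}(\log x)^c/\gamma$ as $x\to\infty$, and inverting this supplies an asymptotic for $\bar\Psi^{-1}$. Substituting into $G(t)=\bar\Psi(\bar\Psi^{-1}(t)-\mu)$ and analyzing the right-most solution of $G(t)=\beta t$ lets one track $\delta_\diamond$ and $y^\star := \bar\Psi^{-1}(\delta_\diamond)$ as functions of $\mu$. Finally, to conclude \eqref{property1}, one substitutes into the chain-rule formula at $t=\delta_\diamond$ and compares $\psi(y^\star - \mu)/\psi(y^\star)$ with the value $1$ found above: the slow (power-law) decay of $\psi$ is the key ingredient, ensuring that shifting the argument by $\mu$ does not reduce the density disproportionately, so the ratio exceeds $1$ for $\mu$ past some threshold $\mu_0$.

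The main obstacle is the last step. One must simultaneously track the scales of $\mu$, $y^\star$, and $y^\star-\mu$ precisely enough to invoke the tail asymptotic of $\psi$ and to get a sharp enough bound on the ratio to conclude strict dominance. Care is required because only the right tail of $\psi$ is specified by hypothesis, so the argument must either reduce to that regime or supplement the tail hypothesis with the strict positivity of $\psi$ and the integrability $\int\psi=1$ to handle arguments drifting to $-\infty$.
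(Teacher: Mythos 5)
Your plan follows exactly the paper's route: strong law of large numbers for \eqref{pointlimit}, the chain-rule identity $G'(t)=\psi(\bar\Psi^{-1}(t)-\mu)/\psi(\bar\Psi^{-1}(t))$, the computation $G'(0^+)=1$ from the slowly-varying-corrected power-law tail, and a large-$\mu$ asymptotic for $\delta_\diamond$ obtained from the fixed-point equation $G(t)=\beta t$. The first three items are carried out correctly and match the paper. The issue is that the step you yourself flag as ``the main obstacle'' is exactly where the content of the theorem lives, and you leave it as an obstacle rather than resolving it; as written, the proposal does not establish \eqref{property1}.

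Here is how the paper closes that gap, so you can see that your plan does go through. First, for any fixed $t$, $G'(t)\to 0$ as $\mu\to\infty$ (the numerator $\psi(\bar\Psi^{-1}(t)-\mu)\to 0$), which forces $\delta_\diamond\to 0$ and hence $x_\diamond:=\bar\Psi^{-1}(\delta_\diamond)\to\infty$. Second, the defining relation $G(\delta_\diamond)=\beta\delta_\diamond$ reads $\bar\Psi(x_\diamond-\mu)=\beta\,\bar\Psi(x_\diamond)\to 0$, which forces $x_\diamond-\mu\to+\infty$; this is the observation that dissolves your worry about arguments drifting to $-\infty$ --- only the right tail of $\psi$ is ever evaluated. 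Third, substituting the tail asymptotic $\bar\Psi(x)\sim\gamma^{-1}x^{-\gamma}(\log x)^c$ into that relation gives $(1-\mu/x_\diamond)^{-\gamma}\to\beta$, i.e.\ $x_\diamond\sim a\mu$ with $a:=(1-\beta^{-1/\gamma})^{-1}>1$ (using $\beta>1$). Finally,
\[
G'(\delta_\diamond)=\frac{\psi(x_\diamond-\mu)}{\psi(x_\diamond)}\sim\Big(\frac{x_\diamond}{x_\diamond-\mu}\Big)^{\gamma+1}\to\Big(\frac{a}{a-1}\Big)^{\gamma+1}=\beta^{(\gamma+1)/\gamma}>1=G'(0),
\]
so \eqref{property1} holds for all $\mu$ beyond some $\mu_0$. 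Note the quantitative point your plan misses: it is not merely that the power-law decay keeps the ratio from dropping below $1$; the ratio actually converges to the explicit constant $\beta^{(\gamma+1)/\gamma}$, strictly above $1$, which is what gives the strict inequality with room to spare.
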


\begin{rem}
The result does not say anything about \eqref{A}, which is also required in \thmref{FNR}, but this condition is fulfilled except in pathological cases.
\end{rem}

\section{Numerical experiments}
\label{sec:numerics}

In this section, we perform simple simulations to see the performance of the BH and scan procedures on finite data. We consider the normal and Cauchy mixture models, as in \eqref{X}.

In the set of experiments, the sample size $n \in \{2, \dots, 8\} \times 10^3$. 
We draw $m = n(1-\eps)$ observations from the alternative distribution $\Psi(\cdot - \mu)$, and the other $n-m$ from the null distribution $\Psi$.  
Each situation is repeated 100 times and we report the average FDP and FNP for each procedure together with error bars.  The FDR control level was set at $\alpha = 0.10$. 

\subsection{Normal model}
In this model $\Psi$ is the standard normal distribution. 
We set $\eps = 0.05$ and $\mu = 4$. 
See \figref{normal}, where we have plotted $G$, the P-value distribution under the alternative defined in \eqref{mixture}.  This is a situation where $G$ is concave, so we expect the two methods to behave similarly.  This is confirmed numerically.  In fact, the scan procedure was observed, in these experiments, to coincide with the BH method.  (This does not happen at smaller signal-to-noise ratios, e.g., when $\mu$ is smaller.)
See \figref{performance-normal}, where we have plotted the FDP and FNP of both procedures.

\begin{figure}[H]
    \centering
    \includegraphics[scale = 0.4]{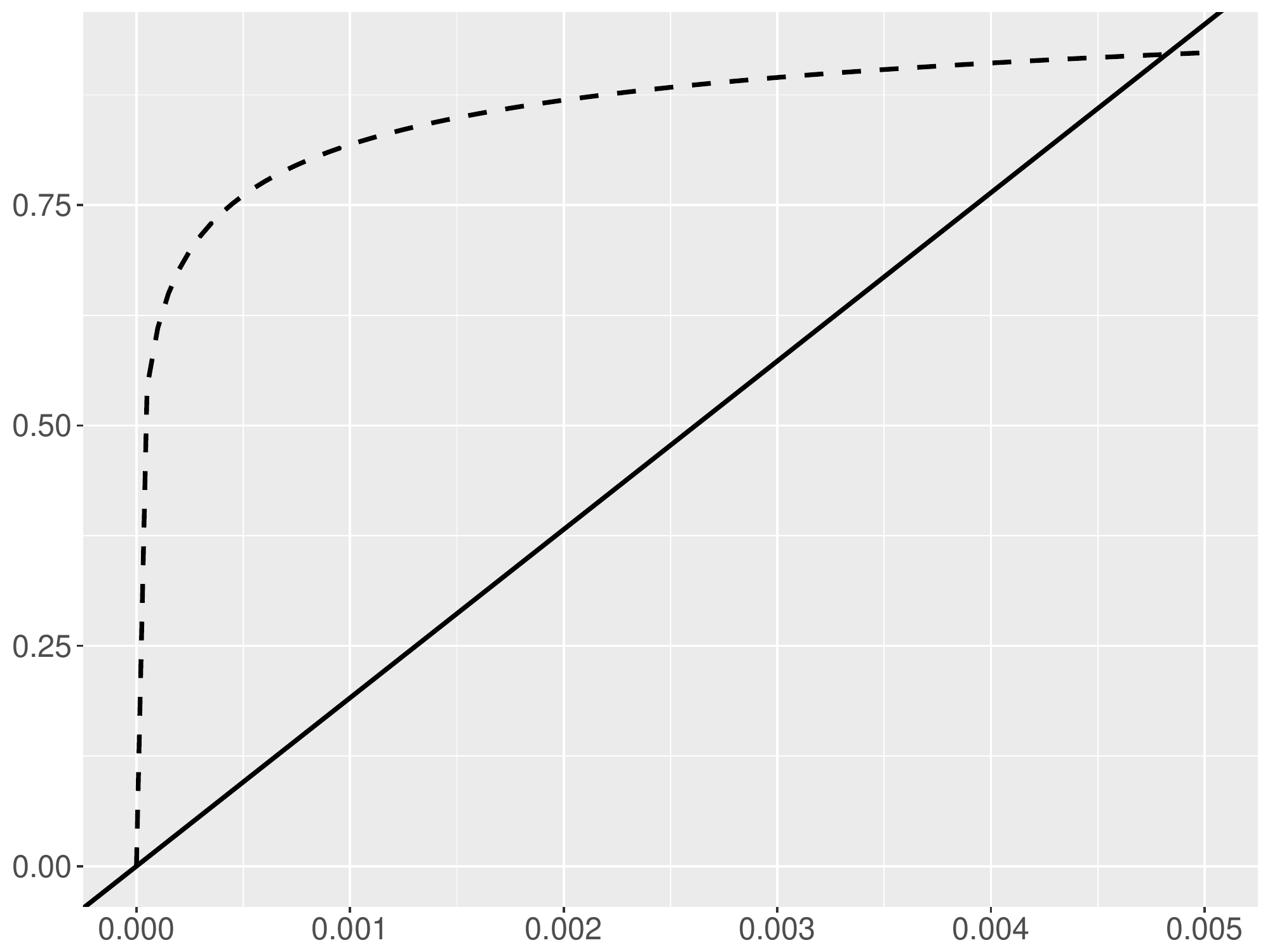}
    \caption{The alternative P-value distribution $G$ in the normal mixture model with $\eps = 0.05$ and $\mu = 4$ (solid black) and the line $y = \beta x$ (dashed black).}
    \label{fig:normal}
\end{figure}

\begin{figure}[t!]
\centering
\includegraphics[scale = 0.4]{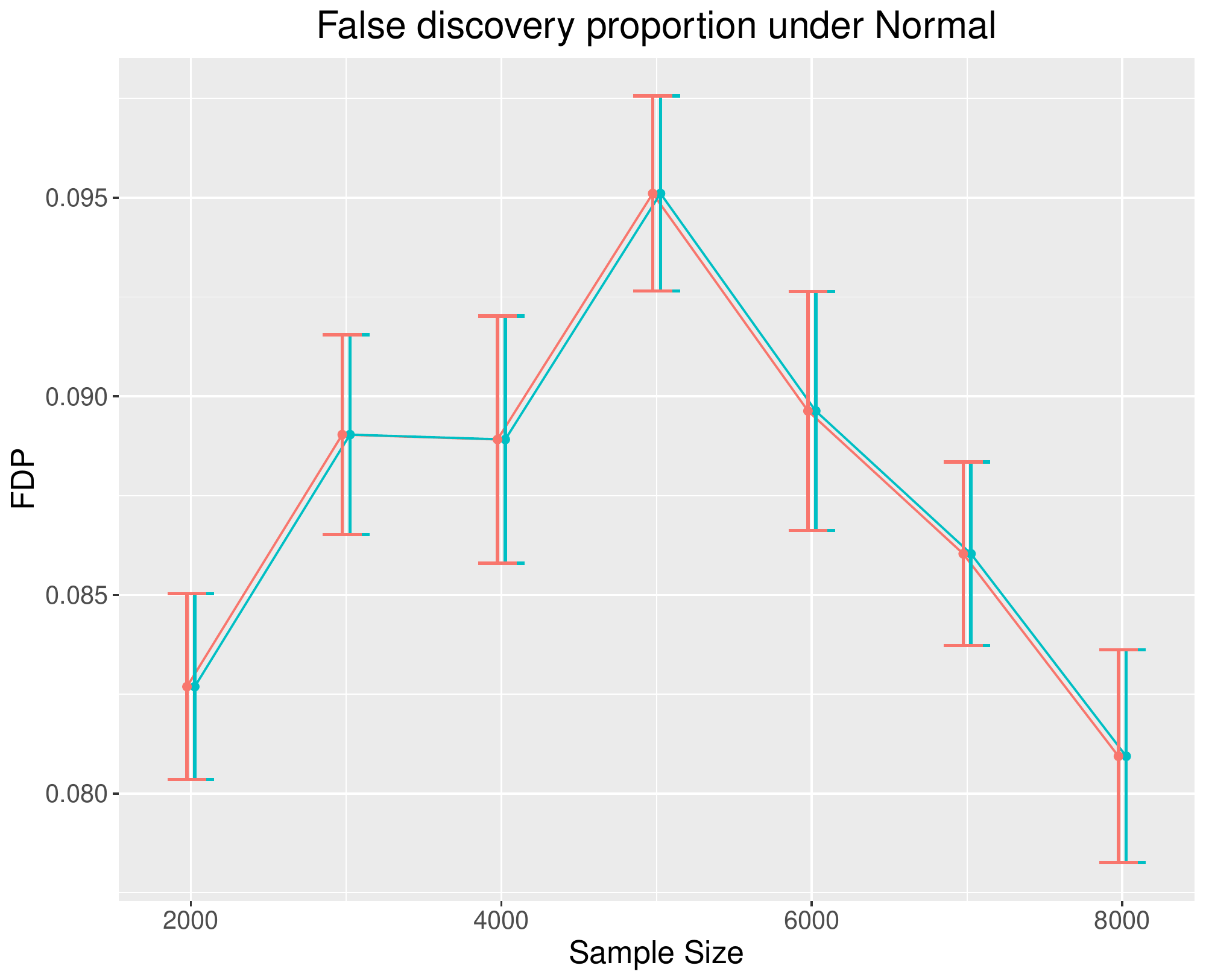}
\includegraphics[scale = 0.4]{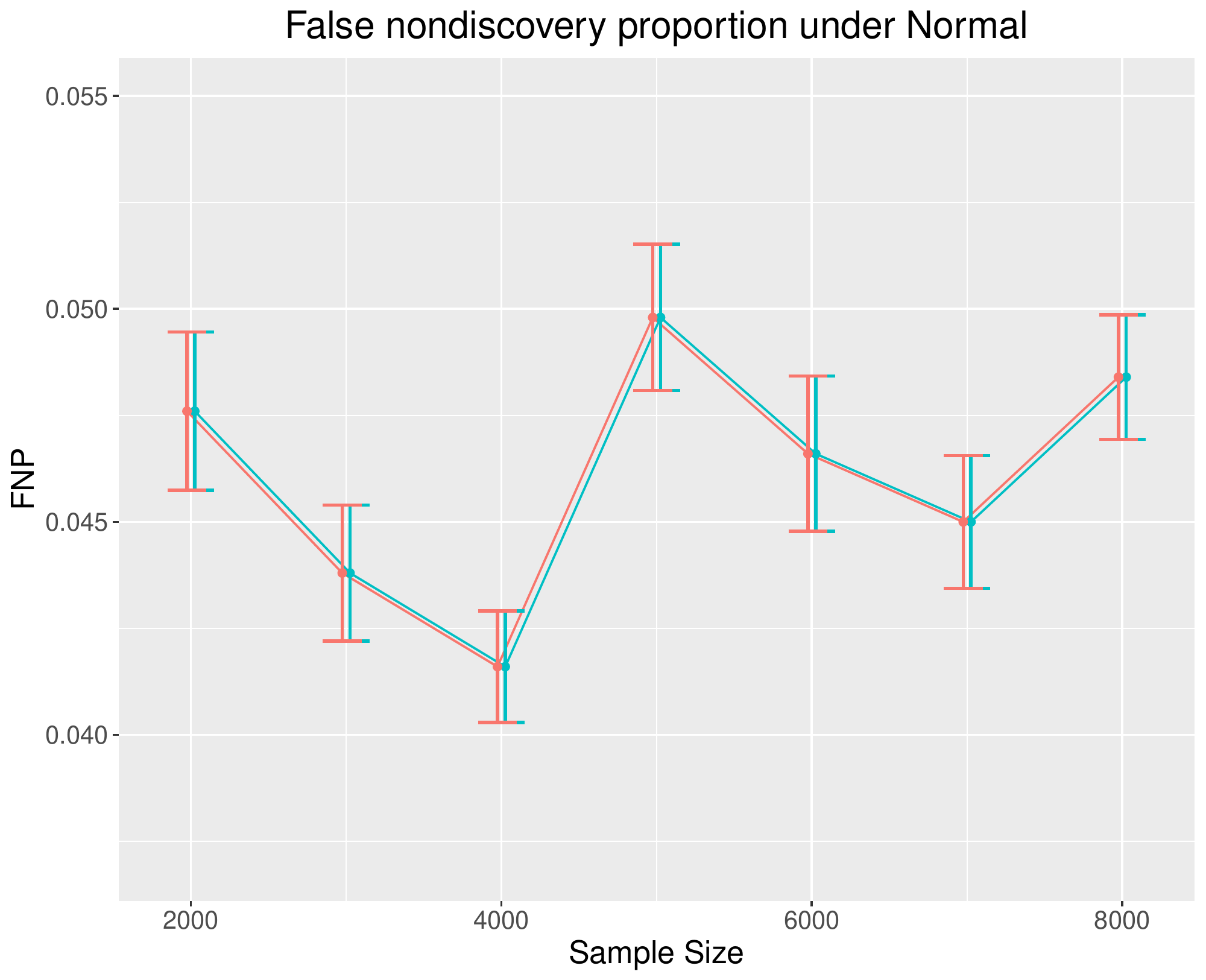}
\caption{FDP and FNP for the BH (red) and scan (blue) methods under normal mixture model.  The methods are essentially identical.  The FDR control was set at $\alpha = 0.10$. }
\label{fig:performance-normal}
\end{figure}

\subsection{Cauchy model}
In this model $\Psi$ is the Cauchy distribution. We set $\eps = 0.10$ and $\mu = 37$.  This choice of parameters leads to a model that satisfies the condition \eqref{property1} in \thmref{outperform}.  See \figref{cauchy} for an illustration. Therefore, here we expect the scan procedure to outperform the BH procedure.  This is confirmed in the numerical experiments.  
See \figref{performance-cauchy}, where we have plotted the FDP and FNP of both procedures.

\begin{figure}[t!]
    \centering
    \includegraphics[scale = 0.4]{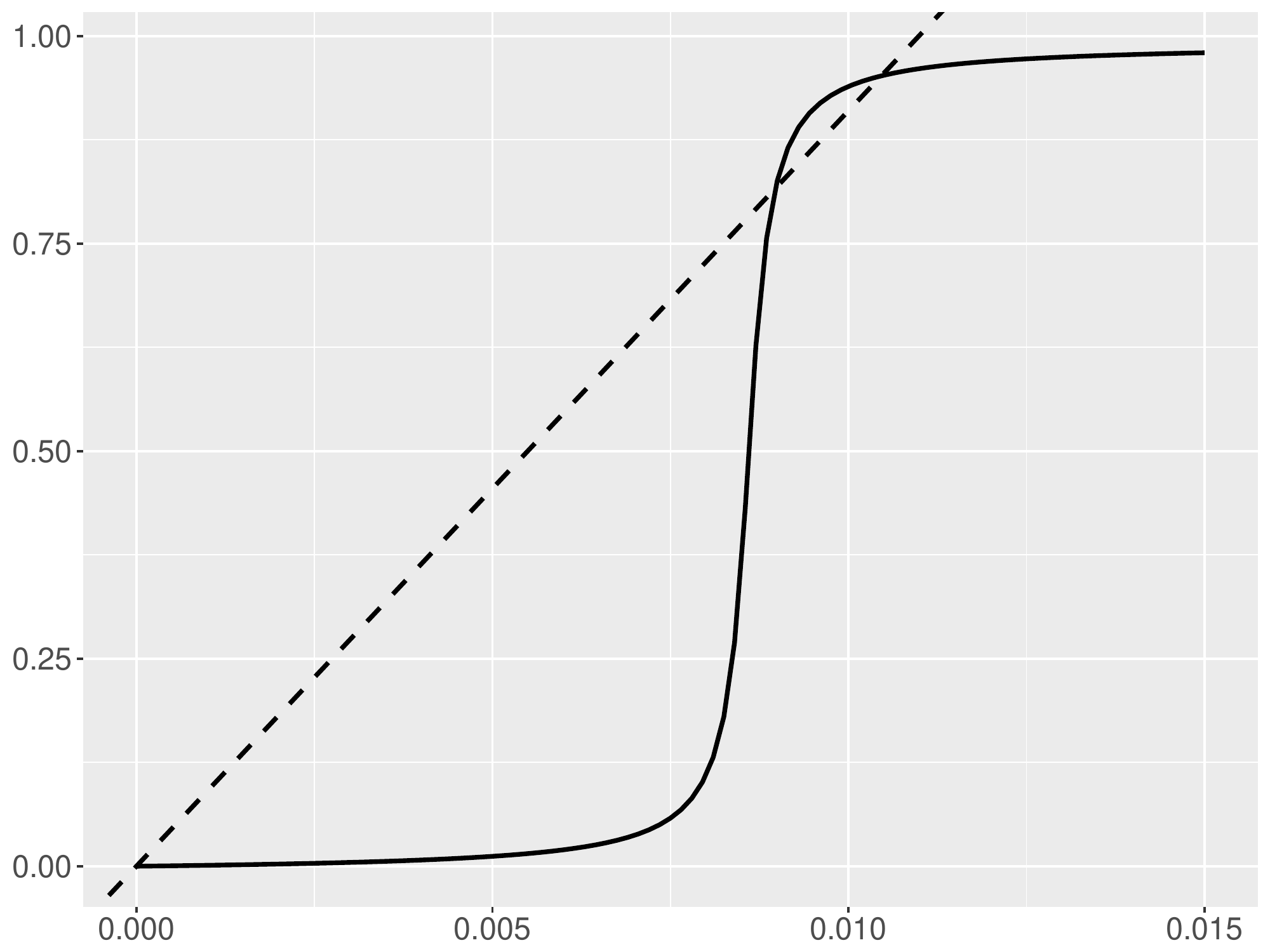}
    \caption{The alternative P-value distribution $G$ in the Cauchy mixture model with $\eps = 0.10$ and $\mu = 37$ (solid black) and the line $y = \beta x$ (dasned black). }    \label{fig:cauchy}
\end{figure}

\begin{figure}[t!]
    \centering
    \includegraphics[scale = 0.4]{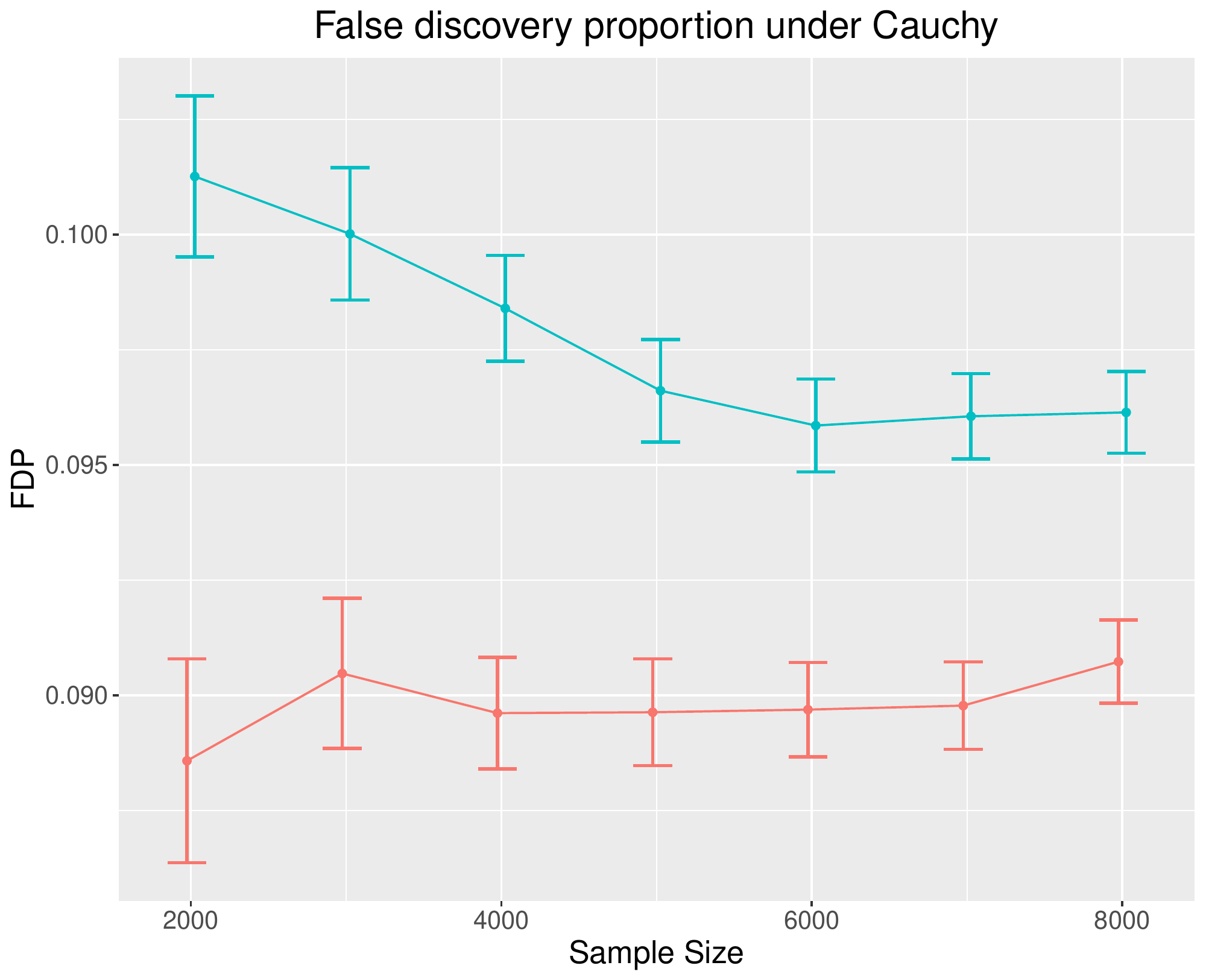}
    \includegraphics[scale = 0.4]{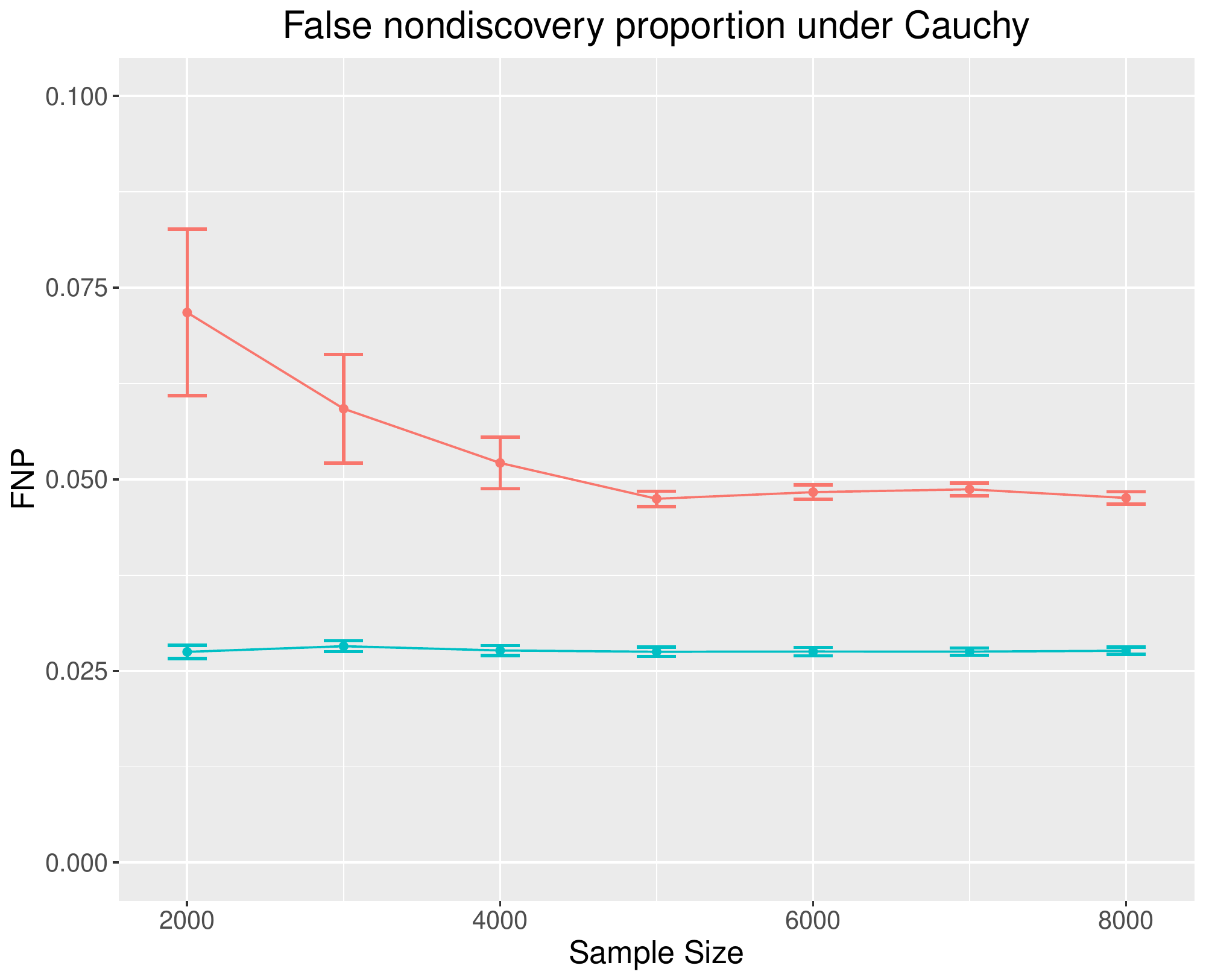}
\caption{FDP and FNP for the BH (red) and scan (blue) methods under Cauchy mixture model.  The methods are essentially identical.  The FDR control was set at $\alpha = 0.10$. }
    \label{fig:performance-cauchy}
\end{figure}

\section{Discussion} \label{sec:discussion}

\cite{genovese2002operating} argue that the BH method is not optimal among threshold procedures due to its being conservative in terms of FDR control.  We expect the same to be true of our scan procedure.  We could have pursued an improvement analogous to how the BH method was ameliorated in \citep{storey2002direct, benjamini2000adaptive} based on estimating the number of true null hypotheses ($n_0$ in our notation), but we chose not to do so for the sake of simplicity and focus.

We also want to mention that the present situation, where a scan method is found to improve upon a threshold method, has a parallel in the test of the global null hypothesis.  Indeed, continuing with the line of work coming out of \citep{ingster1997some, donoho2004higher}, we have recently considered the problem of detecting a sparse mixture and shown that threshold tests are inferior to scan tests in power-law models, although in a somewhat different asymptotic regime \citep{arias2018detection}.

\section{Proofs}
\label{sec:proofs}

We prove our results in this section.

\subsection{Proof of \thmref{conservative}} \label{sec:proof-conservative}
For any $s \le t$, 
\begin{align}
& \E \big [\widehat{\fdr}(s,t) - \fdr(s,t) \big ] \\
&= \E \bigg [\frac{n(t-s) - V(s,t)}{R(s,t) \vee 1} \bigg] \\
&\ge \E \bigg [\frac{n(t-s) - V(s,t)}{R(s,t)} \IND{V(s,t) \ge 1} \bigg]  \\
& = \E \bigg [ \E \bigg [\frac{n(t-s) - V(s,t)}{V(s,t) + S(s,t)} \IND{V(s,t) \ge 1}\ \bigg|\ S(s,t) \bigg] \bigg ] \\
& \ge \E \bigg [\frac{n(t-s) - \E \big [ V(s,t) \IND{V(s,t) \ge 1}\ \big|\ S(s,t) \big]}{\E \big [ V(s,t) \IND{V(s,t) \ge 1}\ \big|\ S(s,t) \big] + S(s,t)} \bigg]  \label{Jensen} \\ 
& = \E \bigg [\frac{n(t-s) - n_0(t-s)}{n_0(t-s) + S(s,t)}  \bigg] \ge 0, \label{expectation}
\end{align}
where \eqref{Jensen} follows from Jensen's inequality, based on the fact that $v \mapsto (a-v)/(b+v)$ is convex over $v \ge 0$, while \eqref{expectation} uses the fact that P-values are independent and uniform in $[0,1]$ under their null.  The fact that the very last expression is non-negative comes from the fact that $n_0 \le n$.

\subsection{Some preliminaries}

Henceforth, we assume that \eqref{pointlimit} and \eqref{fraction}.  Before proving our main results, we establish a few auxiliary lemmas.

\begin{lem}\label{lem:infty-conv}
For any fixed $d > 0$, almost surely,
\beq\label{infty-conv}
\lim_{n \to \infty} \sup_{t-s \ge d} \big| \widehat{\fdr}(s,t) - \overline{\fdr}^\infty(s,t) \big| = 0.
\eeq
\end{lem}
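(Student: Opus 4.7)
\textbf{Proof plan for \lemref{infty-conv}.} The strategy is to reduce the claim to uniform convergence of the empirical distribution functions of the null and non-null P-values, then invert using the fact that the limiting denominator is bounded below on $\{t-s \ge d\}$. Recall that
\[
\widehat{\fdr}(s,t) = \frac{n(t-s)}{R(s,t) \vee 1}, \qquad \overline{\fdr}^\infty(s,t) = \frac{t-s}{\pi_0(t-s)+\pi_1(G(t)-G(s))},
\]
and $R(s,t)/n = (V(s,t)/n_0)(n_0/n) + (S(s,t)/n_1)(n_1/n)$. Once we establish that $R(s,t)/n$ converges uniformly over $0 \le s \le t \le 1$ to $\pi_0(t-s)+\pi_1(G(t)-G(s))$, the conclusion will follow from a simple division argument because on $\{t-s \ge d\}$ the limiting denominator is at least $\pi_0 d > 0$.

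First, I would strengthen the pointwise convergences in \eqref{pointlimit} to uniform convergences on $[0,1]$. Observe that the functions $t \mapsto V(0,t)/n_0$ and $t \mapsto S(0,t)/n_1$ are nondecreasing right-continuous in $t$, and by \eqref{pointlimit} they converge almost surely pointwise to the continuous limits $t \mapsto t$ and $t \mapsto G(t)$, respectively. The classical Polya-type theorem (monotone pointwise convergence to a continuous limit on a compact interval implies uniform convergence) then yields, almost surely,
\[
\sup_{t \in [0,1]} \left| \frac{V(0,t)}{n_0} - t \right| \to 0, \qquad \sup_{t \in [0,1]} \left| \frac{S(0,t)}{n_1} - G(t) \right| \to 0.
\]
Writing $V(s,t) = V(0,t) - V(0,s)$ and similarly for $S$, a triangle inequality upgrades this to uniform convergence over the two-parameter region $\{0 \le s \le t \le 1\}$, and combining with \eqref{fraction} gives
\[
\sup_{0 \le s \le t \le 1} \left| \frac{R(s,t)}{n} - \bigl[\pi_0(t-s) + \pi_1(G(t)-G(s))\bigr] \right| \xrightarrow[n\to\infty]{} 0, \quad \text{a.s.}
\]

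Finally, fix $d > 0$. On the region $\{t - s \ge d\}$, the limiting denominator satisfies $\pi_0(t-s) + \pi_1(G(t)-G(s)) \ge \pi_0 d > 0$, so by the uniform convergence just established, for all $n$ large enough (almost surely) we have $R(s,t)/n \ge \tfrac{1}{2}\pi_0 d$ uniformly on this region, in particular $R(s,t) \ge 1$ so that $R(s,t) \vee 1 = R(s,t)$. Writing
\[
\widehat{\fdr}(s,t) - \overline{\fdr}^\infty(s,t) = (t-s) \cdot \frac{\bigl[\pi_0(t-s)+\pi_1(G(t)-G(s))\bigr] - R(s,t)/n}{(R(s,t)/n)\bigl[\pi_0(t-s)+\pi_1(G(t)-G(s))\bigr]},
\]
the numerator tends to $0$ uniformly on $\{t-s \ge d\}$ by the previous step, while both factors in the denominator are bounded below by $\tfrac{1}{2}\pi_0 d \cdot \pi_0 d$ and $(t-s) \le 1$, giving the claimed uniform convergence.

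The only nontrivial ingredient is the passage from pointwise to uniform convergence of the empirical CDFs, which I expect to be the main step; it is crucial here that we cannot simply appeal to Glivenko-Cantelli since the P-values are only assumed independent, not identically distributed. Continuity of the limit $G$ (assumed in \eqref{pointlimit}) together with the monotonicity of the prelimit functions makes the Polya-type argument applicable. The rest is bookkeeping.
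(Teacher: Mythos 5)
Your proposal is correct and follows essentially the same route as the paper: upgrade the pointwise convergences in \eqref{pointlimit} to uniform convergence of $R(s,t)/n$ (the paper invokes this as "well-known," you supply the Polya-type monotonicity argument), then use the lower bound $\pi_0(t-s)+\pi_1(G(t)-G(s)) \ge \pi_0 d$ on $\{t-s\ge d\}$ to justify the division. The only difference is that you make explicit the two steps the paper leaves as "well-known" and "straightforward."
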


\begin{proof}
As is well-known, the pointwise convergences that we assume, namely \eqref{pointlimit}, imply uniform convergences, so that, together with \eqref{fraction}, we have 
\beq \label{GC-null}
\lim_{n \to \infty} \sup_{s \le t} \bigg| \frac{V(s,t)}{n} - \pi_0(t-s) \bigg | = 0,
\eeq
\beq \label{GC-alt}
\lim_{n \to \infty} \sup_{s \le t} \bigg| \frac{S(s,t)}{n} - \pi_1(G(t) - G(s))\bigg | = 0,
\eeq
almost surely. 
Combining these also yields
\beq \label{GC-mixture}
\lim_{n \to \infty} \sup_{s \le t} \bigg| \frac{R(s,t)}{n} - \big\{\pi_0(t-s) + \pi_1(G(t) - G(s))\big\}\bigg | = 0,
\eeq
When $t-s \ge d$, we have 
\beqn
\pi_0 (t-s) + \pi_1 (G(t)-G(s)) \ge \pi_0 d > 0,
\eeq
and it is thus straightforward to show that 
\beqn
\lim_{n \to \infty} \sup_{t-s \ge d} \bigg| \frac{n(t-s)}{R(s,t) \vee 1} - \frac{t-s}{\pi_0 (t-s) + \pi_1(G(t) - G(s)} \bigg| = 0, 
\eeq
almost surely, which establishes our claim.
\end{proof}

\begin{lem}\label{lem:uniform}
Almost surely,
\beqn
\lim_{n \to \infty} \inf_{s \le t} \big\{\widehat{\fdr}(s,t) - \fdp(s,t) \big\} \ge 0.
\eeq
\end{lem}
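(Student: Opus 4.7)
The plan is to reduce the claim to the uniform Glivenko--Cantelli estimates already obtained in the proof of \lemref{infty-conv}. I first rewrite
\[
\widehat{\fdr}(s,t) - \fdp(s,t) = \frac{n(t-s) - V(s,t)}{R(s,t) \vee 1},
\]
so that the task becomes to bound this ratio from below uniformly in $s \le t$.

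The key ingredient is \eqref{GC-null}, which together with \eqref{fraction} produces a deterministic sequence $\epsilon_n \to 0$, almost surely, such that $V(s,t) \le n\pi_0(t-s) + n\epsilon_n$ uniformly in $s \le t$. Substituting into the numerator yields
\[
n(t-s) - V(s,t) \ge n\pi_1(t-s) - n\epsilon_n.
\]

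I would then split into cases on the length of the interval. In the regime $t-s \ge \epsilon_n/\pi_1$, the right-hand side above is nonnegative, so $\widehat{\fdr}(s,t) - \fdp(s,t) \ge 0$ and the claim follows with room to spare. In the complementary regime $t-s < \epsilon_n/\pi_1$, the same uniform bound forces $V(s,t) \le n\epsilon_n/\pi_1$, so that both $V(s,t)$ and $n(t-s)$ are of order $n\epsilon_n$; dividing by $R(s,t) \vee 1$ when this denominator is of order $n$ (i.e.\ when \eqref{GC-mixture} kicks in and gives $R(s,t) \ge c n$), the ratio is of order $\epsilon_n$ and tends to $0$, as required.

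The main obstacle is precisely the second regime on tiny intervals where $R(s,t) \vee 1$ can be as small as $1$, so that the naive ratio bound does not close. I would handle this by restricting attention to intervals with $R(s,t)$ comparable to $n$, which is exactly the regime in which the lemma is subsequently used: by \eqref{A} and \thmref{limit}, the scan optimizer satisfies $\hat\tau_n - \hat\sigma_n \ge \delta - o(1)$ almost surely, so in the eventual application of the lemma to the proof of \thmref{FDRcontrol} only intervals of length bounded below are in play, and the first regime of the dichotomy already governs.
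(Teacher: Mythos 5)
Your reduction to the uniform Glivenko--Cantelli bound \eqref{GC-null} is the same first move as the paper's proof, and your first case ($t-s \ge \epsilon_n/\pi_1$, where the numerator $n(t-s)-V(s,t)$ is nonnegative and hence the difference is nonnegative whatever the denominator) is exactly the content the paper extracts from that bound. The difference is that you are explicit about the regime the paper passes over: the paper observes that $\widehat{\fdr}(s,t)-\fdp(s,t)\ge 0$ if and only if $t-s-V(s,t)/n\ge 0$ and concludes that it suffices to prove $\liminf_n\inf_{s\le t}\{t-s-V(s,t)/n\}\ge 0$. But a pointwise sign equivalence does not transfer an asymptotic lower bound through division by $R(s,t)\vee 1$, which can be as small as $1$. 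So the obstacle you name is genuine, and it is not resolved in the paper's own argument either.

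Indeed, the statement as written fails on degenerate intervals: take $s=t=P_i$ for some $i\in\Null$ (such an $i$ exists for all large $n$ since $\pi_0>0$). Then $n(t-s)=0$ while, in the absence of ties, $V(s,t)=R(s,t)=1$, so $\widehat{\fdr}(s,t)-\fdp(s,t)=-1$ and the infimum over $s\le t$ equals $-1$ for every large $n$. The lemma is only true, and only needed, after restricting to intervals of length bounded away from zero: for any fixed $d>0$, your first case already gives $\liminf_n\inf_{t-s\ge d}\{\widehat{\fdr}(s,t)-\fdp(s,t)\}\ge 0$, since eventually $\epsilon_n/\pi_1<d$. Your proposed patch --- apply the bound only at $(\hat\sigma_n,\hat\tau_n)$, whose length is eventually bounded below by $\delta/2>0$ --- is the right repair, and there is no circularity since \lemref{point-lb} and \thmref{limit} do not rely on this lemma. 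Be aware, however, of two points. First, this repair requires condition \eqref{A}, which \thmref{FDRcontrol} does not assume; so either \eqref{A} must be added to that theorem's hypotheses or a separate argument is needed when $\hat\tau_n-\hat\sigma_n\to 0$. Second, your second case as sketched cannot be closed: for $t-s<\epsilon_n/\pi_1\to 0$, \eqref{GC-mixture} forces $R(s,t)=o(n)$, so the hypothesis ``$R(s,t)\ge cn$'' under which you bound the ratio never holds there --- which is precisely why the restriction to long intervals is not a convenience but a necessity.
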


\begin{proof}
For the first part, we have
\beqn
\widehat{\fdr}(s,t) - \fdp(s,t) = \frac{n(t-s) - V(s,t)}{R(s,t) \vee 1} \ge 0 \iff t-s - \frac{V(s,t)}n \ge 0,
\eeq
so that we only need to prove that
\beqn
\lim_{n \to \infty} \inf_{s \le t} \big \{t-s - V(s,t)/n\big\} \ge 0.
\eeq
But this simply comes from \eqref{GC-null} and \eqref{fraction}.
\end{proof}

\begin{lem}\label{lem:point-lb}
If \eqref{A} holds, then, almost surely,
\beq\label{point-lb}
\liminf_{n \to \infty}  \big\{\hat\tau_n - \hat\sigma_n \big\} \ge \delta.
\eeq
\end{lem}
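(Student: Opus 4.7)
\subsection*{Proof plan for \lemref{point-lb}}

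The plan is to exhibit, for every $\eta \in (0, \delta)$, a fixed interval $[s_\eta, t_\eta]$ of length at least $\delta - \eta$ on which $\overline{\fdr}^\infty$ is strictly less than $\alpha$. Once such an interval is produced, \lemref{infty-conv} (applied with any $d \in (0, \delta - \eta]$) gives $\widehat{\fdr}(s_\eta, t_\eta) \to \overline{\fdr}^\infty(s_\eta, t_\eta) < \alpha$ almost surely, so for all $n$ sufficiently large the pair $(s_\eta, t_\eta)$ is feasible for the optimization defining $(\hat\sigma_n, \hat\tau_n)$. By the maximality of $\hat\tau_n - \hat\sigma_n$ among feasible intervals, this forces $\hat\tau_n - \hat\sigma_n \ge t_\eta - s_\eta \ge \delta - \eta$ eventually, and letting $\eta \downarrow 0$ yields \eqref{point-lb}.

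To construct the interval, fix $(s^*, t^*) \in \cA$ satisfying the strict monotonicity in \eqref{A}. I would first dispose of the easy case: if $\overline{\fdr}^\infty(s^*, t^*) < \alpha$, simply set $(s_\eta, t_\eta) = (s^*, t^*)$ for every $\eta$ and the plan above goes through. Otherwise, continuity of $G$ combined with maximality forces $\overline{\fdr}^\infty(s^*, t^*) = \alpha$. Now invoke \eqref{A}: either $u \mapsto \overline{\fdr}^\infty(u, t^*)$ is strictly decreasing at $u = s^*$, in which case for every small $\epsilon > 0$ one has $\overline{\fdr}^\infty(s^* + \epsilon, t^*) < \overline{\fdr}^\infty(s^*, t^*) = \alpha$, or $u \mapsto \overline{\fdr}^\infty(s^*, u)$ is strictly increasing at $u = t^*$, in which case $\overline{\fdr}^\infty(s^*, t^* - \epsilon) < \alpha$. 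In the first case take $(s_\eta, t_\eta) = (s^* + \epsilon, t^*)$ with any $\epsilon \in (0, \eta)$; in the second case take $(s_\eta, t_\eta) = (s^*, t^* - \epsilon)$. Either choice has length at least $\delta - \eta$ and $\overline{\fdr}^\infty$-value strictly below $\alpha$.

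The main (mild) obstacle is the handling of the boundary case $\overline{\fdr}^\infty(s^*, t^*) = \alpha$: one cannot directly feed $(s^*, t^*)$ into \lemref{infty-conv} and conclude, because $\widehat{\fdr}(s^*, t^*)$ could fluctuate around $\alpha$ and violate the feasibility constraint infinitely often. The strict monotonicity in \eqref{A} is precisely what lets us push a hair inside the feasible region at the cost of only $\epsilon$ in interval length, while remaining in the regime $t - s \ge \delta - \eta > 0$ where \lemref{infty-conv} provides the necessary uniform (indeed pointwise suffices here) control. The rest of the argument is a routine sandwich: almost-sure convergence on the chosen fixed interval and $\eta \downarrow 0$.
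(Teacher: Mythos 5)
Your proposal is correct and follows essentially the same route as the paper: fix $(s^*,t^*)\in\cA$ satisfying the strict monotonicity in \eqref{A}, perturb one endpoint inward so that $\overline{\fdr}^\infty$ drops strictly below $\alpha$, use the almost-sure convergence of $\widehat{\fdr}$ on that fixed interval to get eventual feasibility, invoke maximality of $\hat\tau_n-\hat\sigma_n$, and let the perturbation vanish along a countable sequence. The only cosmetic difference is that you split off the case $\overline{\fdr}^\infty(s^*,t^*)<\alpha$, which the paper's argument absorbs automatically since it only uses $\overline{\fdr}^\infty(u,t)<\overline{\fdr}^\infty(s,t)\le\alpha$.
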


\begin{proof}
Let $(s, t)$ be as in \eqref{A}, with (for example) $u \mapsto \overline{\fdr}^\infty(u,t)$ strictly decreasing at $u = s$.  Then there is $\eps > 0$ such that $\overline{\fdr}^\infty(u,t) < \overline{\fdr}^\infty(s,t)$ when $s < u \le s + \eps$.  
With probability one, $\widehat{\fdr}(u,t)$ converges to $\overline{\fdr}^\infty(u,t)$, and when this is the case, $\widehat{\fdr}(u,t) \le \alpha$ for $n$ sufficiently large, then implying that $t-u \le \hat\tau_n - \hat\sigma_n$ by definition in \eqref{scan}.  Hence, we have shown that for any such $u$, $\liminf_{n \to \infty}  \big\{\hat\tau_n - \hat\sigma_n \big\} \ge t - u$ almost surely, and we conclude by letting $u \searrow s$.  (Recall that $\delta = t -s$ for any $(s,t) \in \cA$.)
\end{proof}


\subsection{Proof of \thmref{FDRcontrol}}
For the first part, using \lemref{uniform}, we have
\beqn
\liminf_{n \to \infty}  \big [\widehat{\fdr}(\hat\sigma_n, \hat\tau_n) - \fdp(\hat\sigma_n, \hat\tau_n) \big ] \ge 0,
\eeq
almost surely, and we conclude with \eqref{fdr-alpha}.

The second part just follows from the first part and Fatou's lemma.

\subsection{Proof of \thmref{limit}}
With probability one, a realization satisfies \eqref{infty-conv} with $d = \delta/2$, and \eqref{point-lb}.
Consider such a realization and let $(s^*, t^*)$ be an accumulation point of $(\hat\sigma_n, \hat\tau_n)$.  

Because \eqref{point-lb} holds, we have $t^* - s^* \ge \delta$.  

We also have $\hat\tau_n - \hat\sigma_n \ge d$, eventually, and because \eqref{infty-conv} holds, this implies that
\beqn
\lim_{n \in \cN} \widehat{\fdr}(\hat\sigma_n, \hat\tau_n) - \overline{\fdr}^\infty (\hat\sigma_n, \hat\tau_n) = 0.
\eeq
Together with \eqref{fdr-alpha}, we thus have
\beqn
\limsup_{n \to \infty} \overline{\fdr}^\infty (\hat\sigma_n, \hat\tau_n) \le \alpha.
\eeq
By continuity of $\overline{\fdr}^\infty$, this implies that $\overline{\fdr}^\infty(s^*, t^*) \le \alpha$, in turn implying that $t^* - s^* \le \delta$.  

We have thus established that $(s^*, t^*)$ satisfies $t^* - s^* = \delta$ and $\overline{\fdr}^\infty(s^*, t^*) \le \alpha$, and therefore $(s^*, t^*)$ belongs to $\cA$ by definition.

\subsection{Proof of \thmref{FNR}}
By definition of $S$ in \tabref{outcomes}, we have
\beqn
\fnp(\hat\sigma_n, \hat\tau_n) = 1 - S(\hat\sigma_n, \hat\tau_n)/n_1.
\eeq
By \eqref{GC-alt} and \thmref{limit} together with the fact that $G(t) - G(s) = \beta \delta$ for any $(s, t) \in \cA$, almost surely,
\beqn
S(\hat\sigma_n, \hat\tau_n)/n_1 \to \beta \delta.
\eeq
We thus have, almost surely,
\beqn
\fnp(\hat\sigma_n, \hat\tau_n) \to 1 - \beta \delta,
\eeq
and we conclude using the Dominated Convergence theorem.

\subsection{Proof of \thmref{outperform}}
Adapting the proof of \thmref{FNR}, we can establish an analogous result for the BH method, specifically,
\beq 
\E[\fnp(\hat\tau_{\diamond, n})] \to 1 - \beta \delta_\diamond,
\eeq 
almost surely, where $\delta_\diamond$ was defined in \eqref{BH-tau}.
Therefore, to compare the asymptotic FNR of the scan and the BH procedures, we need to compare $\delta$ and $\delta_\diamond$.

Define $A_0 = \arctan(G'(0))$, $A_\diamond = \arctan(G'(\delta_\diamond))$, and $B = \arctan(\beta)$. Apparently, we have $0 \le A_0, A_\diamond, B \le \frac{\pi}{2}$.
By the fact that $\delta_\diamond$ is the right-most solution to $G(t) = \beta t$, we have $G'(\delta_\diamond) \le \beta$.  Hence, $\phi := B - A_\diamond \ge 0$.
This, coupled with \eqref{property1}, implies that $G'(0) < \beta$, so that $\theta := B - A_0 > 0$.

Let $\cL_0$ denote the line with slope $\beta$ passing through the origin, and for $d \ge 0$, let $\cL_d$ denote the line parallel to $\cL_0$ at a distance $d$ below $\cL_0$.  
See \figref{property} for an illustration.  
Because $G'(0) < \beta$ and $G(\delta_\diamond) = \beta \delta_\diamond$, and by continuity of $G$, the graph of $G$ intersects $\cL_0$ at least twice.  Choosing $d$ small enough, it is therefore also the case that $G$ intersects $\cL_d$ at least twice.  Let $s_d$ and $t_d$ denote the horizontal coordinates of the leftmost and rightmost intersection points, respectively.  Note that $s_d \to 0$ as $d \to 0$ by that fact that $G'(0) < \beta$, and $t_d \to \delta_\diamond$ by the fact that $\delta_\diamond$ is the right-most solution to $G(t) = \beta t$.
\begin{figure}[t!]
    \centering
	\includegraphics[width = 7cm, height= 6cm]{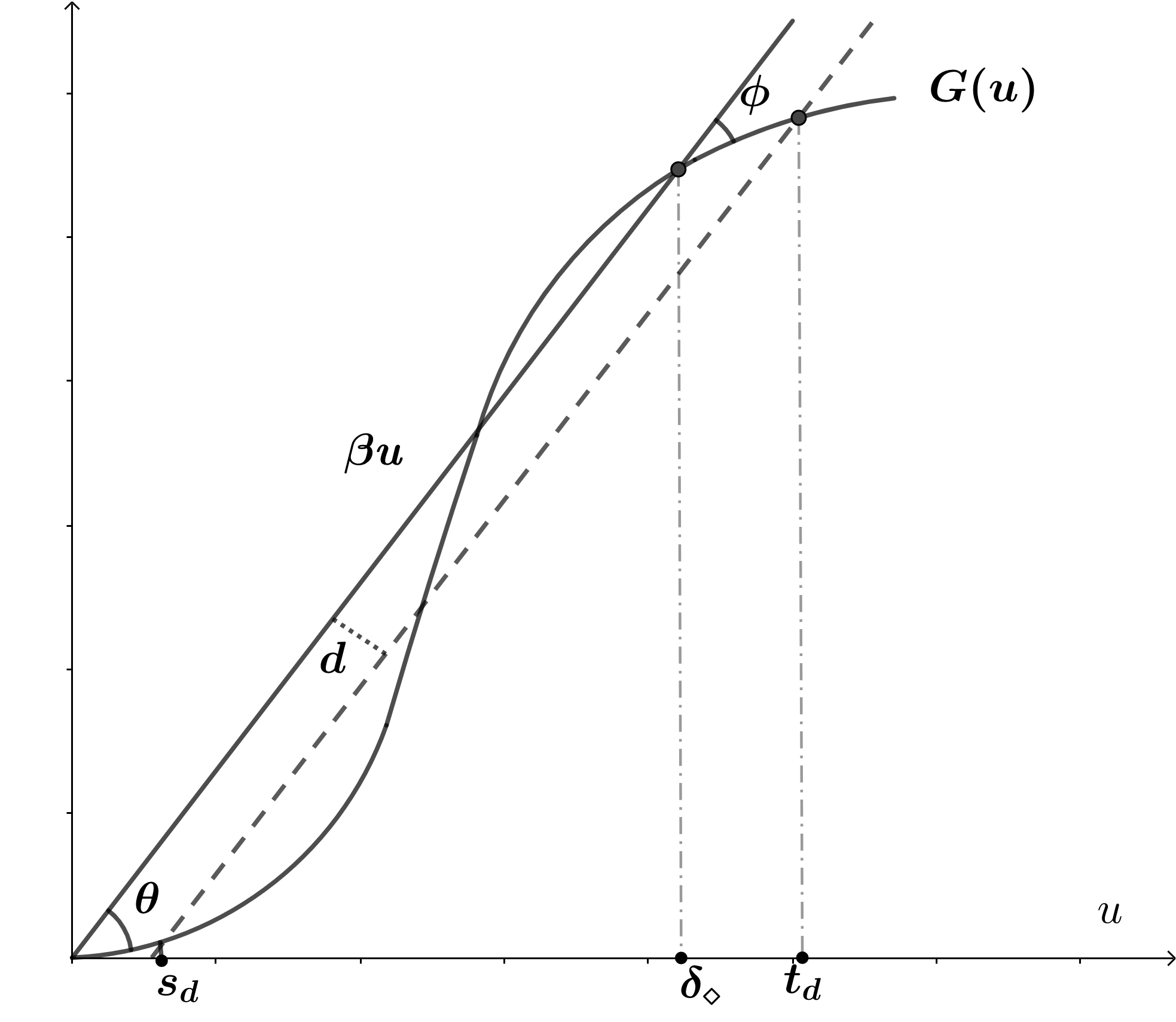}
	\hspace*{.5in}
	\caption{Example which satisfies Condition \ref{property1} in \thmref{outperform}.}
	\label{fig:property}	
\end{figure}
Moreover, as $d\to 0$, by simple geometry arguments, we have
\beqn
s_d \sim \sin(A_0) \cdot \frac{d}{\sin (\theta)}, \quad
t_d - \delta_\diamond \sim \sin(A_\diamond) \cdot \frac{d}{\sin(\phi)}.
\eeq
Since $G'(0) < G'(\delta_\diamond)$, we have $\theta > \phi \ge 0$ and also $A_0 < A_\diamond$.
It follows that, for $d$ small enough, $\delta_\diamond < t_d - s_d$.
Due to the fact, by construction, 
\beqn
\frac{G(t_d)-G(s_d)}{t_d-s_d} = \beta,
\eeq
we have $t_d-s_d \le \delta_\diamond$, by definition of the latter.

\subsection{Proof of \thmref{propcondition}}
The Law of Large Numbers implies that the condition \eqref{pointlimit} holds.  We thus turn to the remaining of the statement. 

We note that $G(t) = \bar{\Psi}(\bar{\Psi}^{-1}(t) - \mu)$ is differentiable on $(0,1)$, with derivative
\beqn
G'(t) = \frac{\psi(\bar{\Psi}^{-1}(t) - \mu)}{\psi(\bar{\Psi}^{-1}(t))}.
\eeq

As $t \to 0$, we have $\bar{\Psi}^{-1}(t) \to \infty$, and by the fact that for all $a \in \bbR$, $\psi(x - a) \sim \psi(x)$ as $x \to \infty$, we have that $G'$ is differentiable at $0$, with derivative $G'(0) = 1$.

We also have that, for any fixed $t$, $G'(t) \to 0$ as $\mu \to \infty$, due to the fact that $\psi(x) \to 0$ as $x \to -\infty$.  Hence, $\delta_\diamond \to 0$ as $\mu \to \infty$.  Let $x_\diamond = \bar\Psi^{-1}(\delta_\diamond)$, so that $x_\diamond \to \infty$ as $\mu \to \infty$.  Because $G(\delta_\diamond) = \beta \delta_\diamond$, we have $\bar\Psi(x_\diamond - \mu) = \beta \bar\Psi(x_\diamond)$.  
Because the right-hand side tends to 0, we must have $x_\diamond - \mu \to \infty$ as $\mu \to \infty$.  
Then using the fact that $\bar\Psi(x) \sim \frac1\gamma x^{-\gamma} (\log x)^c$ as $x \to \infty$, we must have
\beqn
\frac1\gamma (x_\diamond -\mu)^{-\gamma} (\log(x_\diamond -\mu))^c \sim \beta \frac1\gamma x_\diamond^{-\gamma} (\log x_\diamond)^c,
\eeq
or equivalently,
\beqn
(1 -\mu/x_\diamond)^{-\gamma} \bigg(\frac{\log(x_\diamond -\mu)}{\log x_\diamond}\bigg)^c \to \beta,
\eeq
as $\mu \to \infty$.  This is seen to imply that $x_\diamond \sim a \mu$, where $a := (1 - \beta^{-1/\gamma})^{-1}$.  Note that $a > 1$.  We then have, as $\mu \to \infty$,
\beqn
G'(\delta_\diamond) = \frac{\psi(x_\diamond - \mu)}{\psi(x_\diamond)} \sim \frac{(x_\diamond - \mu)^{-\gamma-1} (\log(x_\diamond - \mu))^c}{x_\diamond^{-\gamma-1} (\log x_\diamond)^c} \to \Big(\frac{a}{a-1}\Big)^{\gamma + 1} > 1.
\eeq

We conclude that, for $\mu$ large enough, $G'(\delta_\diamond) > 1 = G'(0)$.


\bibliographystyle{chicago}
\bibliography{ref}

\end{document}